\documentclass[11pt]{amsart}
\usepackage{amsthm}
\usepackage{titlesec}
\usepackage{mathtools}
\usepackage{enumitem}
\usepackage{graphicx, amsmath,latexsym, amsfonts, amssymb, amstext}
\usepackage{mathrsfs}
\usepackage{color}
\usepackage[utf8]{inputenc}
\usepackage[english]{babel}
\usepackage{hyperref}
\hypersetup{
    colorlinks=true,
    linkcolor=blue,
   filecolor=magenta,      
}

\textheight=8.9in \textwidth=6.2in \oddsidemargin=0.25cm
\evensidemargin=0.28cm \topmargin=.2cm
 
\usepackage{textcomp}
\usepackage[small,bf]{caption}
\usepackage{amssymb, amsmath, amsfonts, amsthm}
\usepackage{epsfig,amsfonts,amsbsy,amssymb,amsthm}
\usepackage[numbers]{natbib}
%
\raggedbottom
\allowdisplaybreaks

\newtheorem{theorem}{Theorem}
\newtheorem{lemma}[theorem]{Lemma}
\newtheorem{corollary}[theorem]{Corollary}
\newtheorem{proposition}[theorem]{Proposition}
\newtheorem{oldtheorem}{Theorem}

\theoremstyle{definition}
\newtheorem{question}{Question}

\newtheorem{definition}[theorem]{Definition}
\newtheorem{example}[theorem]{Example}
\newtheorem{remark}[theorem]{Remark}

\numberwithin{theorem}{section}
\usepackage{titlesec}
\titleformat{\section}{\normalfont\bfseries\large}{\S\thesection.}{2mm}{}

\newcommand{\ints}{\mathbb{Z}}

\newcommand{\m}{\mathfrak m}

\def\to{\longrightarrow}
\DeclareMathOperator{\Supp}{Supp}
\DeclareMathOperator{\assh}{Assh}
\DeclareMathOperator{\ass}{Ass}

\DeclareMathOperator{\depth}{depth}

\titlespacing*{\section}{0pt}{2\baselineskip}{1\baselineskip}
\title[On Hilbert coefficients of parameter ideals and Cohen-Macaulayness]{On Hilbert coefficients of parameter ideals and Cohen-Macaulayness}

\author{Kumari Saloni}
\thanks{Date: September 23, 2015.}
\thanks{2000 Mathematics Subject Classification 13H10, 13D40}
\thanks{Key words : Hilbert-Samuel Polynomial, Hilbert coefficients, Cohen-Macaulay ring, superficial element.}
\address{Department of Mathematics, IIT Guwahati, Guwahati, Assam 781039, India}
\email{saloni.kumari@iitg.ac.in}

\newcommand*\xbar[1]{%
  \hbox{%
    \vbox{%
      \hrule height 0.5pt 
      \kern0.5ex
   \hbox{%
        \kern-0.1em
        \ensuremath{#1}%
        \kern-0.1em
      }%
    }%
  }%
}

\begin{document}
\sloppy
 \maketitle
%
 \begin{abstract}
Let $(R,\m)$ be an unmixed Noetherian local ring, $Q$ a parameter ideal and $K$ an $\m$-primary ideal of $R$ containing $Q$.  
We give a necessary and sufficient condition for $R$ to be Cohen-Macaulay in terms of 
$g_0(Q)$ and $g_1(Q)$, the Hilbert coefficients of $Q$ with respect to $K$. As a consequence,
we obtain a result of Ghezzi et al. which settles the negativity conjecture of W. V. Vasconcelos \cite{vanishing-conjecture} in unmixed local rings. 
\end{abstract}
 
\section{Introduction}
Let $(R,\m)$ be a Noetherian local ring of dimension $d > 0$ and $Q$ an $\m$-primary ideal of $R$. 
Let $K$ be an ideal such that $Q\subseteq K$. 
Let $G(Q)=\mathop{\oplus}\limits_{n\geq 0} Q^n/Q^{n+1}$ be the associated graded ring of $Q$. The fiber cone of $Q$ with respect to $K$ 
is the standard graded algebra $F_K(Q)=\mathop{\oplus}\limits_{n\geq 0} Q^n/KQ^n$. Let $\ell(M)$ denote the length of an $R$-module $M.$
The {\it{Hilbert function}} of the fiber cone $F_K(Q)$ is given by $H(F,n)=\ell(Q^n/KQ^n)$. It is well known that $H(F,n)$ agrees with
a polynomial $P(F,n)$ of degree $d-1$ for all $n\gg 0$, called the {\it{Hilbert polynomial}} of $F_K(Q)$. 
We can write  $P(F,n)$ in the following way:
\begin{equation*}
 P(F,n)=\sum\limits_{i=0}^{d-1}(-1)^if_i(Q)\binom{n+d-i-1}{d-1-i}
\end{equation*}
 where the coefficients $f_i(Q)$ are integers and are referred to as the {\it fiber coefficients} of $Q$ with respect to $K.$

The {\it{Hilbert-Samuel function}} of $Q$ is the function $H(Q,n)=\ell(R/Q^n)$.  
We recall the notion of Hilbert function of $Q$ with respect to $K$ from \cite{Jayanthan&Verma1}. 
It is the function $H_K(Q,-): \mathbb {Z} \rightarrow \mathbb{N}$ defined as
\begin{equation*} 
H_K(Q,n)=\ell(R/KQ^n) \text{~~~~~ for ~~~~} n\in \ints. 
\end{equation*}
It is known that for $n\gg 0$, $H(Q,n)$ (resp. $H_K(Q,n)$) agrees with a polynomial
$P(Q,n)$ (resp. $P_K(Q,n)$) of degree $d$. We can write these polynomials in the following manner:
\begin{eqnarray}
P(Q,n)&=& \sum\limits_{i=0}^d(-1)^ie_i(Q)\binom{n+d-i-1}{d-i}\label{Hilbert-S-poly}\\
P_K(Q,n)&=&\sum\limits_{i=0}^d(-1)^ig_i(Q)\binom{n+d-i-1}{d-i}\label{Hilbert-S-poly-for-K}
\end{eqnarray}
for unique integers $e_i(Q)$ (resp. $g_i(Q)$) known as the {\it{Hilbert coefficients}} of $Q$ (resp. {\it{Hilbert coefficients}} of $Q$ with 
respect to $K$).

In this paper, we relate the properties of the Hilbert coefficients $g_i(Q)$ with Cohen-Macaulayness of $R$. Jayanthan and Verma \cite{Jayanthan&Verma1}
have developed the basic tools for studying these coefficients which include the theory of $F_K(Q)$-superficial and regular elements, 
a version of Sally's machine for $F_K(Q)$ etc. 
Using these techniques, many
interesting properties of the polynomial $P_K(Q,n)$ and coefficients $g_i(Q)$ have been discussed in \cite{Jayanthan&Verma, Jayanthan&Verma1, Clare, Gu-Zhu-Tang, Gu-Zhu-Tang2}
as a generalization of analogous properties of $P(Q,n)$ and $e_i(Q)$.

Our work is inspired by the negativity conjecture posed by Vasconcelos
\cite[Conjecture 1]{vanishing-conjecture} at the conference in Yokohama in 
2008. He conjectured that for every parameter ideal $Q$ in a Noetherian local ring $R$, $e_1(Q)<0$ if and only if $R$ is not Cohen-Macaulay. Solving it partially, Mandal, Singh
and Verma \cite{Mandal-Singh-Verma} proved that if $\depth R=d-1$ then $e_1(Q)<0$ for every parameter ideal $Q$. They also showed that $e_1(Q)\leq 0$ in a Noetherian local ring. Ghezzi,
Hong and Vasconcelos in \cite{vanishing-partial1} proved the conjecture if $R$ is an integral
domain which is a homomorphic image of a Cohen-Macaulay ring. 
The conjecture is settled (more generally for modules) by Ghezzi et al. \cite{vanishing1, vanishing2} for unmixed local rings. Recall that $R$ 
is said to be {\it{unmixed}} if $\dim \hat{R}/p=\dim R$ for all $p\in Ass(\hat{R})$. The precise result is the following:

\begin{oldtheorem}\label{oldthm-thm3} {\rm\cite[Theorem 2.1]{vanishing1}}
 Let $(R,\m)$ be a Noetherian local ring of dimension $d>0$ and $Q$ a parameter ideal of $R$. Then the following statements are equivalent:
  \begin{enumerate}[label=(\alph*)]
   \item \label{thm-vanishing-statement-a} $R$ is Cohen-Macaulay;
   \item \label{thm-vanishing-statement-b} $R$ is unmixed and $e_1(Q)=0;$
   \item \label{thm-vanishing-statement-c} $R$ is unmixed and $e_1(Q)\geq 0$.
  \end{enumerate}
\end{oldtheorem}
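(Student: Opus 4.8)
The plan is to deduce the theorem from a criterion for Cohen--Macaulayness in terms of $g_0(Q)$ and $g_1(Q)$ (the characterisation this paper is about), specialised to the case $K=Q$. When $K=Q$ one has $KQ^n=Q^{n+1}$, so $P_K(Q,n)=P(Q,n+1)$, and comparing the binomial coefficients in the two Hilbert polynomials gives $g_0(Q)=e_0(Q)$ and $g_1(Q)=e_1(Q)-e_0(Q)$. Hence a criterion of the shape ``$R$ is Cohen--Macaulay if and only if $R$ is unmixed and $g_1(Q)$ attains its extreme value $-g_0(Q)$'' reads literally as ``$R$ is Cohen--Macaulay if and only if $R$ is unmixed and $e_1(Q)=0$''; and since $e_1(Q)\le 0$ in every Noetherian local ring (Mandal--Singh--Verma), the condition $e_1(Q)=0$ may be relaxed to $e_1(Q)\ge 0$. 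This would give (a)$\Leftrightarrow$(b)$\Leftrightarrow$(c) at once, so the remaining work is to establish the $g$-criterion. The implications (a)$\Rightarrow$(b)$\Rightarrow$(c) are the easy half: a Cohen--Macaulay ring is unmixed, and if $R$ is Cohen--Macaulay then a parameter ideal $Q$ is generated by a regular sequence, so $G(Q)\cong(R/Q)[T_1,\dots,T_d]$, $\ell(R/Q^{n+1})=\ell(R/Q)\binom{n+d}{d}$, and reading off coefficients gives $e_i(Q)=0$ for $i\ge 1$; while (b)$\Rightarrow$(c) is trivial.

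So the content is the implication (c)$\Rightarrow$(a): assuming $R$ unmixed and $e_1(Q)\ge 0$, hence $e_1(Q)=0$, show that $R$ is Cohen--Macaulay. I would first replace $R$ by $\hat{R}$ (this changes neither unmixedness, nor the Hilbert coefficients of $Q$, nor Cohen--Macaulayness) and then induct on $d=\dim R$. Unmixedness forces $R$ to be $S_1$ and equidimensional, so $\depth R\ge 1$; in particular, when $d=1$ the ring $R$ is already Cohen--Macaulay, which is the base case. For $d\ge 2$, the complete, $S_1$, equidimensional ring $R$ admits a module-finite $S_2$-ification $R\hookrightarrow\tilde{R}$, with $\tilde{R}$ satisfying $S_2$, equidimensional, and $R\to\tilde{R}$ an isomorphism in codimension $\le 1$, so that $C:=\tilde{R}/R$ has dimension $\le d-2$. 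The argument then runs in two steps. Step one: compare the Hilbert polynomials of $Q$ over $R$ and over $\tilde{R}$ by means of $0\to R\to\tilde{R}\to C\to 0$. Since $\dim C\le d-2$ the leading coefficients agree, and a finer analysis of the identity $\ell(\tilde{R}/Q^n\tilde{R})-\ell(R/Q^nR)=\ell(C/Q^nC)-\ell\big((Q^n\tilde{R}\cap R)/Q^nR\big)$ should produce a relation $e_1(Q;R)=e_1(Q;\tilde{R})-\delta$ with $\delta\ge 0$ and $\delta>0$ whenever $C\neq 0$; combined with $e_1(Q;\tilde{R})\le 0$ and $e_1(Q;R)=0$, this forces $\delta=0$, hence $C=0$ and $R=\tilde{R}$, i.e.\ $R$ satisfies $S_2$. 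Step two: choose $x\in Q$ superficial for $Q$; as $\depth R\ge 1$ and $d\ge 2$, $x$ is a nonzerodivisor, $\bar{R}:=R/xR$ is again unmixed of dimension $d-1$ (now that $R$ is $S_2$, equidimensional and catenary), and $e_1(Q\bar{R})=e_1(Q)=0$ by the standard behaviour of Hilbert coefficients modulo a superficial nonzerodivisor; the induction hypothesis makes $\bar{R}$ Cohen--Macaulay, hence so is $R$.

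The main obstacle is Step one — extracting from $0\to R\to\tilde{R}\to C\to 0$ a comparison of first Hilbert coefficients with the correct sign, so that the presence of a nontrivial $S_2$-ification is detected by $e_1(Q)$, equivalently by the value of $g_1(Q)$. This is exactly the sort of statement that the theory of $F_K(Q)$-superficial and $F_K(Q)$-regular elements is built to supply, and carrying out the whole descent in terms of the coefficients $g_i(Q)$ (where, for $K=Q$, $g_1=e_1-e_0$) is what lets the two-step induction close. The remaining ingredients — the easy implications, the reduction to the completion, the one-dimensional base case, and the passage modulo a superficial nonzerodivisor — are routine.
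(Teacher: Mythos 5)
Your opening reduction is exactly the paper's proof of this statement: the paper derives it as a corollary of its main theorem by setting $K=Q$, noting $g_0(Q)+g_1(Q)=e_1(Q)$ and $-\ell(R/K)+\ell(R/Q)=0$, so that conditions (b) and (c) of the $g$-criterion become literally ``unmixed and $e_1(Q)=0$'' and ``unmixed and $e_1(Q)\geq 0$.'' Your treatment of the easy implications and the reduction to the complete case also matches.

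Where you diverge is in the substantive implication (c)$\Rightarrow$(a), and there your sketch has a genuine gap at precisely the point you flag as ``the main obstacle.'' From the exact sequence $0\to R\to\widetilde{R}\to C\to 0$ with $\dim C\leq d-2$, the length identity only yields $e_1(Q;R)=e_1(Q;\widetilde{R})-\delta$ with $\delta\geq 0$, where $\delta$ is the normalized degree-$(d-1)$ coefficient of $\ell\bigl((Q^n\widetilde{R}\cap R)/Q^n\bigr)$; the assertion that $\delta>0$ whenever $C\neq 0$ is not established and is essentially equivalent to the statement being proved (it says that failure of $S_2$ is detected by a strict drop in $e_1$). Even granting $e_1(Q)=e_1(Q;\widetilde{R})=0$ and $\delta=0$, nothing in your argument forces $C=0$, so the descent to Step two never gets off the ground. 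There are also secondary issues: $\widetilde{R}$ is in general only semilocal, so $e_1(Q;\widetilde{R})$ and the appeal to ``$e_1\leq 0$'' for it need care. The paper's route for (c)$\Rightarrow$(a) is quite different and avoids the $S_2$-ification entirely: it inducts on $d$ using a nonzerodivisor superficial element and a one-dimensional computation via $H_\m^0$ for $d=2$, and for $d\geq 3$ uses Goto--Nakamura's results to choose $x_1\in Q$ with $\ass(R/(x_1))\subseteq\assh(R/(x_1))\cup\{\m\}$, passes to $S/U_S(0)$ (controlling $g_0,g_1$ through the unmixed component, whose dimension drop is what makes the coefficients stable), applies induction there, and then recovers $H_\m^i(R)=0$ from the local cohomology long exact sequences together with Nakayama's lemma and the finiteness of $H_\m^1(R)$. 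To complete your approach you would need to supply the strict-inequality lemma for the $S_2$-ification (as in Ghezzi--Hong--Vasconcelos's treatment of the domain case), which is a substantial missing ingredient rather than a routine verification.
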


By using the methods of \cite{vanishing1}, we obtain following characterization of Cohen-Macaulayness in terms of $g_0(Q)$ and $g_1(Q)$.
As a consequence of Theorem \ref{oldthm-thm1}, we recover Theorem \ref{oldthm-thm3}.

\begin{oldtheorem}\label{oldthm-thm1}{\rm[Theorem \ref{thm-vanishing}]}
  Let $(R,\m)$ be a Noetherian local ring of dimension $d>0$ and $Q$ a parameter ideal of $R$. Let $K$ be an ideal such that $Q\subseteq K$. Then the following statements are equivalent:
  \begin{enumerate}[label=(\alph*)]
   \item  $R$ is Cohen-Macaulay;
   \item  $R$ is unmixed and $g_0(Q)+g_1(Q)= -\ell(R/K)+\ell(R/Q);$
   \item  $R$ is unmixed and $g_0(Q)+g_1(Q)\geq -\ell(R/K)+\ell(R/Q);$
     \item $R$ is unmixed and $f_0(Q) \leq \ell(R/K)+e_1(Q)+e_0(Q)-\ell(R/Q);$
   \item $R$ is unmixed and $f_0(Q) = \ell(R/K)+e_1(Q)+e_0(Q)-\ell(R/Q).$
  \end{enumerate}
 \end{oldtheorem}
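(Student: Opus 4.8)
The plan is to begin by pinning down the relations among the three families of coefficients. The short exact sequences $0\to Q^n/KQ^n\to R/KQ^n\to R/Q^n\to 0$ give $H_K(Q,n)=H(Q,n)+H(F,n)$ for every $n$, hence the polynomial identity $P_K(Q,n)=P(Q,n)+P(F,n)$. Since $Q$ is a parameter ideal the analytic spread of the $\m$-primary ideal $Q$ is $d$, so $\dim F_K(Q)=d$ and $P(F,n)$ has degree exactly $d-1$; comparing leading terms forces $g_0(Q)=e_0(Q)$, and matching the degree-$(d-1)$ coefficients then forces $g_1(Q)=e_1(Q)-f_0(Q)$, whence $g_0(Q)+g_1(Q)=e_0(Q)+e_1(Q)-f_0(Q)$. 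Granting this, the equivalences (b)$\Leftrightarrow$(e) and (c)$\Leftrightarrow$(d) become pure rearrangements and (b)$\Rightarrow$(c) is immediate, so the whole theorem reduces to proving (a)$\Leftrightarrow$(b).

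For (a)$\Rightarrow$(b): if $R$ is Cohen–Macaulay then $Q$ is generated by a regular sequence, so $G(Q)$ is a polynomial ring over $R/Q$ and $\ell(R/Q^n)=\ell(R/Q)\binom{n+d-1}{d}$, giving $e_0(Q)=\ell(R/Q)$ and $e_i(Q)=0$ for $i\ge1$. To compute $f_0(Q)$ I would use that a regular sequence is of linear type: $\mathcal R(Q)\cong R[T_1,\dots,T_d]/(x_iT_j-x_jT_i)$, and reducing modulo $K$ kills every defining relation because $x_i\in Q\subseteq K$, so $F_K(Q)\cong (R/K)[T_1,\dots,T_d]$ and hence $f_0(Q)=\ell(R/K)$ (in fact $f_i(Q)=0$ for $i\ge1$). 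Substituting into the identity above gives $g_0(Q)+g_1(Q)=\ell(R/Q)-\ell(R/K)$, i.e. (b), since a Cohen–Macaulay ring is unmixed. The same surjection $(R/K)[T_1,\dots,T_d]\twoheadrightarrow F_K(Q)$ records, along the way, the general bound $f_0(Q)\le\ell(R/K)$, which I will need below.

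The substance is (c)$\Rightarrow$(a), which I would deduce from the sharper assertion that, for $R$ unmixed, one always has $f_0(Q)\ge \ell(R/K)+e_0(Q)+e_1(Q)-\ell(R/Q)$ — equivalently $g_0(Q)+g_1(Q)\le\ell(R/Q)-\ell(R/K)$ — with equality if and only if $R$ is Cohen–Macaulay. (Specializing $K=Q$, where $f_0(Q)=e_0(Q)$, this reads $e_1(Q)\le0$ with equality iff $R$ is Cohen–Macaulay, which is exactly Theorem \ref{oldthm-thm3}.) I would prove the sharp assertion following the method of \cite{vanishing1}. Passing to $\hat R$ and then to $\hat R[X]_{\m\hat R[X]}$ leaves unmixedness, all of $e_i(Q),g_i(Q),f_0(Q),\ell(R/Q),\ell(R/K)$, and Cohen–Macaulayness unchanged, so we may assume $R$ complete with infinite residue field, hence a homomorphic image of a regular local ring. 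Now induct on $d$. If $d=1$, unmixedness forces $\depth R\ge1=d$, so $R$ is Cohen–Macaulay and equality holds by the previous paragraph. If $d\ge2$, choose $x\in Q\setminus\m Q$ that is a nonzerodivisor and is superficial both for $Q$ and for $F_K(Q)$ in the sense of \cite{Jayanthan&Verma1} (a generic element of $Q$ works, the exceptional set being a finite union of proper subvarieties); writing $\bar R=R/xR$, $\bar Q=Q\bar R$, $\bar K=K\bar R$, the superficial-reduction machinery of \cite{Jayanthan&Verma1} gives $g_i(\bar Q)=g_i(Q)$ for $i\le d-1$, while $\ell(\bar R/\bar Q)=\ell(R/Q)$ and $\ell(\bar R/\bar K)=\ell(R/K)$ because $x\in Q$; thus the hypothesis descends to the $(d-1)$-dimensional ring $\bar R$.

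The main obstacle is exactly this descent: cutting an unmixed ring by a superficial nonzerodivisor need not preserve unmixedness, and it is precisely here that the hypothesis is essential (the failure of the negativity conjecture in the non-unmixed case shows it cannot be dropped). Following \cite{vanishing1} I would handle this by carrying along the finite-colength defect $H^0_\m(\bar R)$ — and, more generally, the non-equidimensional/non-$S_1$ part of $\bar R$ — and showing that it can only strengthen the inequality $g_0(\bar Q)+g_1(\bar Q)\le\ell(\bar R/\bar Q)-\ell(\bar R/\bar K)$ to a strict one: consequently, if equality holds for $Q$ then these defects must vanish at every stage, the induction hypothesis does apply, $\bar R$ is Cohen–Macaulay, and then $R$ is Cohen–Macaulay since $x$ is a nonzerodivisor. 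The bottom of the induction is the one-dimensional identity $e_1=-\ell(H^0_\m)$, which is where the defects accumulated over the $d-1$ hyperplane sections are finally paid for; carrying out this bookkeeping cleanly is the delicate part of the argument.
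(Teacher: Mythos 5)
Your reductions are sound and match the paper's: the identity $P_K(Q,n)=P(Q,n)+P(F,n)$ gives $g_0=e_0$ and $g_0+g_1=e_0+e_1-f_0$, so (b)$\Leftrightarrow$(e), (c)$\Leftrightarrow$(d); your computation of $F_K(Q)\cong (R/K)[T_1,\dots,T_d]$ in the Cohen--Macaulay case correctly yields (a)$\Rightarrow$(b) (the paper instead quotes D'Cruz's formula $g_i(Q)=(-1)^i\ell(R/K)$, but the content is the same). The general strategy for (c)$\Rightarrow$(a) -- complete, cut by a superficial nonzerodivisor, induct on $d$ -- is also the paper's.

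The gap is in your resolution of the obstacle you yourself identify, namely that $S=R/(x_1)$ need not be unmixed. You assert that the non-unmixed defect of $S$ ``can only strengthen the inequality to a strict one,'' so that under equality ``these defects must vanish at every stage'' and the induction hypothesis applies to $S$ itself. This is not true, and the paper does not prove it. Writing $U=U_S(0)$ and $\bar S=S/U$, the defect $U$ perturbs $g_1(QS)$ only when $\dim U=\dim S-1$ (Lemma \ref{lemma-for-g1-in-unmixed-rings}); when $\dim U\le \dim S-2$ one gets $g_0(Q\bar S)+g_1(Q\bar S)=g_0(QS)+g_1(QS)$, and chasing equality through the chain of inequalities only yields $KS\cap(QS+U)=QS$, which does not force $U=0$. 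What the argument actually delivers is that $\bar S$ (not $S$) is Cohen--Macaulay, and one cannot conclude ``$R$ is Cohen--Macaulay since $x$ is a nonzerodivisor'' from that. The paper closes this gap with two ingredients you omit: (i) the Goto--Nakamura choice of generators (Proposition \ref{thm-as-the-main-lemma2}) guaranteeing $\ass(S)\subseteq\assh(S)\cup\{\m\}$, hence $\dim U=0$ and, via Lemma \ref{thm-as-the-main-lemma1}, the finiteness of $\ell(H^1_\m(R))$ -- a generic superficial element does not automatically have this property; and (ii) a local cohomology argument: from $\bar S$ Cohen--Macaulay one gets $H^0_\m(S)=U$ and $H^i_\m(S)=0$ for $1\le i\le d-2$, then the long exact sequence of $0\to R\xrightarrow{x_1}R\to S\to 0$ shows $H^1_\m(R)=x_1H^1_\m(R)$ (killed by Nakayama using (i)) and that $x_1$ is injective on the $\m$-torsion modules $H^i_\m(R)$, $2\le i\le d-1$, forcing them to vanish. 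Separately, note that the case $d=2$ cannot be folded into your induction as stated: the one-dimensional quotient need not be unmixed, so your base case does not apply to it; the paper instead invokes the unconditional one-dimensional statement (Proposition \ref{prop-lemma for main result}) there. Your closing remark about the identity $e_1=-\ell(H^0_\m)$ shows you sense where the difficulty lies, but the bookkeeping you defer is exactly the missing proof.
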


 This paper is organized as follows. Section \ref{section_pre} is devoted to some preliminary results on $F_K(Q)$-superficial elements.
In Section \ref{section_thevanishingthm}, 
we prove Theorem \ref{oldthm-thm1} and discuss its consequences. 
Throughout this paper $(R,\m)$ denotes a Noetherian local ring and $H_\m^i(*)$ denotes the $i$-th local cohomology functor with support in the maximal ideal $\m$.
 
We refer \cite{bruns-herzog} and \cite{Matsumura} for undefined terms.

\section{Preliminaries}\label{section_pre}
In this section, we recall and prove basic properties of superficial elements and 
superficial sequences in $G(Q)$ and $F_K(Q)$. The theory of superficial elements in $G(Q)$ is an 
effective method in the study of Hilbert coefficients, $e_i(Q)$, as it allows to apply
induction on the dimension of $R$. See \cite[Proposition 1.2]{book-Rossi-valla}. 
In \cite[Section 2]{Jayanthan&Verma1}, the authors developed the theory of superficial elements in $F_K(Q).$ 

Let $Q$ be an ideal of $R$ and $K$ an ideal with $Q\subseteq K.$ For an element $0\neq x\in R$,
let $x^*$(resp. $x^o$) denote the {\it initial form} of $x$ in $G(Q)$ (resp. $F_K(Q)$), i.e. 
the image of $x$ in $G(Q)_i$ (resp. $F_K(Q)_i$), where $i$ is the unique integer such that $x\in Q^i\setminus Q^{i+1}$ 
(resp. $x\in Q^i\setminus KQ^i$).
 \begin{definition}
  \begin{enumerate}
   \item For an element $x\in Q$ such that $x^*\neq 0$ in $G(Q)$, $x^*$ is said to be {\it $G(Q)$-superficial} 
   if there exists an integer $c >0$ such that $(Q^n:x)\cap Q^c=Q^{n-1}$ for all $n > c$. 
   \item For an element $x\in Q$ such that $x^o\neq 0$ in $F_K(Q)$,
   $x^o$ is said to be {\it $F_K(Q)$-superficial} if there exists an integer $c >0$
   such that $(0:x^o)\cap F_K(Q)_n=0$ for all $n>c$.
   \item For a sequence $x_1,\ldots,x_k \in Q$, $x_1^*,\ldots,x_k^* $ (resp. $x_1^o,\ldots,x_k^o$) is said to be {\it $G(Q)$ (resp. $F_K(Q)$)-superficial} if for all $i=1,\ldots,k$, $(x_i^\prime)^*$ 
   (resp. $(x_i^\prime)^o$) is $G(QR_{i-1})$ (resp. $F_{KR_{i-1}}(QR_{i-1})$)-superficial, where $R_{i-1}=R/(x_1,\ldots, x_{i-1})$ and 
   $x_i^\prime$ denotes the image of $x_i$ in $R_{i-1}$.
  \end{enumerate}
 \end{definition}

See \cite[Proposition 2.1]{Jayanthan&Verma} and \cite[Section 2]{Jayanthan&Verma1} for existence and basic
properties of $F_K(Q)$-superficial elements. We recall the following lemma from \cite{Jayanthan&Verma1} which provides a useful characterization of 
$F_K(Q)$-superficial elements. 

\begin{lemma}\label{lemma-sup-elt}{\rm\cite[Lemma 2.3]{Jayanthan&Verma1}}
Let $R$ be a Noetherian local ring of dimension $d >0$. Let $Q$ be an ideal of $R$ and $K$ an $\m$-primary ideal of $R$ such that
$Q\subseteq K$. Then the following statements hold.
\begin{enumerate}[label=(\alph*)] 
  \item \label{lemma-sup-elt-a} If there exists an integer $c>0$ such that $(KQ^{n}:x)\cap Q^c=KQ^{n-1}$ for all $n>c$, then $x^o$ is 
  $F_K(Q)$-superficial.
  \item \label{lemma-sup-elt-b} If $x^o$ is $F_K(Q)$-superficial and $x^*$ is $G(Q)$-superficial, then there exists an integer $c>0$ such that
  $(KQ^{n}:x)\cap Q^c = KQ^{n-1}$ for all $n > c$. Moreover if $x$ is regular on $R$, then $(KQ^n:x)=KQ^{n-1}$ for all $n\gg 0$.
 \end{enumerate}
\end{lemma}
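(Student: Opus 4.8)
The plan is to unwind both statements into assertions about the colon ideals $(KQ^{n}:x)$. Throughout, $x\in Q\setminus KQ$, so $x^o\in F_K(Q)_1$ (and, since $KQ\supseteq Q^2$, automatically $x^*\neq 0$ in $G(Q)$): multiplication by $x^o$ carries $F_K(Q)_n=Q^n/KQ^n$ into $F_K(Q)_{n+1}$, with kernel on $F_K(Q)_n$ equal to $\big[(KQ^{n+1}:x)\cap Q^n\big]/KQ^n$. For \ref{lemma-sup-elt-a} this makes the proof a direct translation: applying the hypothesis with $n+1$ in place of $n$ gives $(KQ^{n+1}:x)\cap Q^c=KQ^n$ for all $n\ge c$, and since $Q^n\subseteq Q^c$ once $n\ge c$ we get $(KQ^{n+1}:x)\cap Q^n\subseteq KQ^n$; hence the above kernel vanishes for all $n\ge c$, so $x^o$ is $F_K(Q)$-superficial with the same constant $c$.

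For the first assertion of \ref{lemma-sup-elt-b}, I would first record what the two hypotheses say in colon-ideal form: $F_K(Q)$-superficiality of $x^o$ yields a constant $c_1>0$ with $(KQ^n:x)\cap Q^{n-1}=KQ^{n-1}$ for all $n>c_1+1$, and $G(Q)$-superficiality of $x^*$ yields $c_2>0$ with $(Q^n:x)\cap Q^{c_2}=Q^{n-1}$ for all $n>c_2$. Put $c=\max\{c_1+1,\,c_2\}$ and take $n>c$ together with $a\in(KQ^n:x)\cap Q^c$. Since $xa\in KQ^n\subseteq Q^n$ and $a\in Q^c\subseteq Q^{c_2}$, the $G(Q)$-superficiality gives $a\in(Q^n:x)\cap Q^{c_2}=Q^{n-1}$; then $a\in(KQ^n:x)\cap Q^{n-1}=KQ^{n-1}$ by the $F_K(Q)$-superficiality, because $n-1>c_1$. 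The reverse inclusion $KQ^{n-1}\subseteq(KQ^n:x)\cap Q^c$ is immediate from $x\in Q$ and $Q^{n-1}\subseteq Q^c$, so $(KQ^n:x)\cap Q^c=KQ^{n-1}$ for all $n>c$.

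For the ``moreover'' I would bring in the Artin--Rees lemma: there is a $c_0$ with $xR\cap Q^n=Q^{n-c_0}(xR\cap Q^{c_0})\subseteq xQ^{n-c_0}$ for all $n\ge c_0$. If $a\in(KQ^n:x)$ then $xa\in Q^n\cap xR\subseteq xQ^{n-c_0}$, say $xa=xb$ with $b\in Q^{n-c_0}$; regularity of $x$ forces $a=b\in Q^{n-c_0}$. For $n$ large enough that $n-c_0\ge c$ and $n>c$, this puts $a$ in $Q^c$, whence $a\in(KQ^n:x)\cap Q^c=KQ^{n-1}$ by the first assertion, and with the trivial reverse inclusion we conclude $(KQ^n:x)=KQ^{n-1}$ for all $n\gg 0$. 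The arguments are elementary; the point I would be most careful about is the order in which the hypotheses of \ref{lemma-sup-elt-b} are used—the $G(Q)$-superficiality must come first, to descend from $Q^c$ into $Q^{n-1}$, before the $F_K(Q)$-superficiality (which a priori only governs the ``diagonal'' colons $(KQ^n:x)\cap Q^{n-1}$) can be applied—and, in the last assertion, that Artin--Rees together with regularity of $x$ is precisely what removes the truncation $\cap\,Q^c$.
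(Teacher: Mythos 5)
Your proof is correct. Note that the paper does not prove this lemma itself---it is quoted from Jayanthan--Verma \cite[Lemma 2.3]{Jayanthan&Verma1}---so there is no internal proof to compare against; but your argument is the standard one: the translation of $(0:x^o)\cap F_K(Q)_n=0$ into $(KQ^{n+1}:x)\cap Q^n=KQ^n$ is exact, the order of use of the two superficiality hypotheses in part (b) (first $G(Q)$-superficiality to descend from $Q^c$ to $Q^{n-1}$, then $F_K(Q)$-superficiality on the diagonal colon) is the right one, and your Artin--Rees argument for the ``moreover'' clause is precisely the device the paper itself deploys in the proof of Lemma \ref{lemma-gi-for-induction} to remove the truncation by $Q^c$ (there in the more general form $(KQ^n:x)=KQ^{n-1}+(0:x)$ without assuming $x$ regular).
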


Existence of $F_K(Q)$-superficial elements is guaranteed by \cite[Proposition 2.2]{Jayanthan&Verma1} when  $K$ is an $\m$-primary ideal. Indeed, we can choose 
$x\in Q$ such that $x^o$ is $F_K(Q)$-superficial as well as $x^*$ is $G(Q)$-superficial by \cite[Proposition 2.2]{Jayanthan&Verma1}.
Hence we use the characterization given by Lemma \ref{lemma-sup-elt} \ref{lemma-sup-elt-b} for $F_K(Q)$-superficial elements in this paper. 
The following lemma guarantees the existence of $F_K(Q)$-superficial elements avoiding a finite set of ideals not containing $Q.$ This result is well known in
case of $G(Q)$-superficial elements, see \cite[Corollary 8.5.9]{swanson-huneke}.

\begin{lemma}\label{lemma-existence-sup-elt-avoiding-ideals}
 Let $(R,\m)$ be a Noetherian local ring of dimension $d >0$. Let $Q$ be an ideal of $R$ and $K$ an  
 $\m$-primary ideal of $R$ such that $Q\subseteq K$. Let $I_1,\ldots,I_r$ be ideals in $R$ not containing $Q$. Then there exists
 an element $x\in Q\setminus \m Q$ that is not contained in any $I_i$ such that $x^*$ is $G(Q)$-superficial and $x^o$ is 
 $F_K(Q)$-superficial.
 
 In particular, if $Q$ contains a nonzerodivisor then there exists an element $x\in Q\setminus \m Q$ such that
 $x^*$ is $G(Q)$-superficial, $x^o$ is $F_K(Q)$-superficial and $x$ is a nonzerodivisor.
\end{lemma}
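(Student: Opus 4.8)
The plan is to construct $x$ by prime avoidance inside the finite-dimensional $R/\m$-vector space $W := Q/\m Q$, after the usual reduction to the case that $R/\m$ is infinite (replacing $R$ by $R[t]_{\m R[t]}$, which leaves the dimension and the ideals $Q$, $K$, $I_1,\dots,I_r$ with the same relevant properties and carries $G(Q)$, $F_K(Q)$ and superficiality along the faithfully flat base change). First I would identify the finitely many ``forbidden'' subspaces of $W$. Let $P \in \ass_{G(Q)} G(Q)$ be relevant, i.e.\ $G(Q)_+ \not\subseteq P$; since $G(Q)$ is generated in degree $1$ over $G(Q)_0$, we get $G(Q)_1 \not\subseteq P$, so $P \cap G(Q)_1$ is a proper $R$-submodule of $G(Q)_1 = Q/Q^2$. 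The key point is that $G(Q)_0 = R/Q$ is Artinian local, so its unique prime $\m/Q$ is nilpotent; hence $\m G(Q) \subseteq P$, and in particular $\m Q/Q^2 = \m G(Q)_1 \subseteq P \cap G(Q)_1$. Therefore the image of $P \cap G(Q)_1$ in $(Q/Q^2)/(\m Q/Q^2) = W$ is a proper subspace. Running the same argument for $F_K(Q)$ — whose degree-zero piece $R/K$ is Artinian local because $K$ is $\m$-primary — attaches to each relevant $P' \in \ass_{F_K(Q)} F_K(Q)$ a proper subspace of $F_K(Q)_1 = Q/KQ$ containing $\m Q/KQ$, hence a proper subspace of $W$.

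For the ideals $I_i$, since $Q \not\subseteq I_i$, Nakayama's lemma gives $(Q \cap I_i) + \m Q \subsetneq Q$, so the image $U_i$ of $Q \cap I_i$ in $W$ is a proper subspace. As $\ass_{G(Q)} G(Q)$ and $\ass_{F_K(Q)} F_K(Q)$ are finite, we now have finitely many proper subspaces of $W$ together with $U_1,\dots,U_r$; since $R/\m$ is infinite, $W$ is not the union of finitely many proper subspaces, so I can pick $\bar x \in W$ lying in none of them (in particular $\bar x \neq 0$) and lift it to $x \in Q$. Then $x \notin \m Q$, so, using $Q^2 \subseteq KQ \subseteq \m Q$, the element $x$ has $x^* \neq 0$ in $G(Q)_1$ and $x^o \neq 0$ in $F_K(Q)_1$; moreover $x \notin I_i$ for every $i$ (otherwise $x \in Q \cap I_i$ and $\bar x \in U_i$), $x^* \notin P$ for every relevant $P \in \ass_{G(Q)} G(Q)$, and $x^o \notin P'$ for every relevant $P' \in \ass_{F_K(Q)} F_K(Q)$. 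By the standard characterization of superficial elements in terms of avoidance of the relevant associated primes (cf.\ \cite[Proposition~1.2]{book-Rossi-valla}, \cite[Section~2]{Jayanthan&Verma1}), the last two conditions say exactly that $x^*$ is $G(Q)$-superficial and $x^o$ is $F_K(Q)$-superficial, which is what we want.

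Finally, for the ``in particular'' clause: if $Q$ contains a nonzerodivisor, then no prime in the finite set $\ass(R)$ contains $Q$, so applying the statement just proved to the enlarged finite list $I_1, \dots, I_r$ together with the members of $\ass(R)$ produces an $x$ that, in addition to the properties above, avoids every associated prime of $R$ and hence is a nonzerodivisor. The submodule computations and the Nakayama step are routine; the steps I expect to be most delicate are the reduction to an infinite residue field (one must verify that superficiality in $G(Q)$ and $F_K(Q)$, and non-containment in the $I_i$, are both preserved and reflected by $R \to R[t]_{\m R[t]}$) and the precise dictionary ``$x^*$ is $G(Q)$-superficial $\iff$ $x^*$ avoids the relevant associated primes of $G(Q)$'' (and its $F_K(Q)$-analogue), which hinges on those primes being finite in number and graded.
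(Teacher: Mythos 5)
Your core argument --- choosing $x$ so that its class in $W=Q/\m Q$ avoids the images of the degree-one pieces of the finitely many relevant associated primes of $G(Q)$ and of $F_K(Q)$, together with the images of the $Q\cap I_i$ --- is exactly the argument the paper intends: its entire proof is the remark that one argues as in \cite[Corollary 8.5.9]{swanson-huneke}, and you have written that argument out, including the correct dictionary between superficiality and avoidance of relevant associated primes. Two points, however, need repair.

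First, the reduction to an infinite residue field does not work as stated. After passing to $R'=R[t]_{\m R[t]}$ you obtain an element of $QR'\setminus \m QR'$, and such an element has no reason to lie in, or descend to, $Q$ itself; so the finite-residue-field case is not actually handled. Indeed the statement can fail there: for $R=k[[X,Y]]/(XY(X+Y))$ with $k=\mathbb{F}_2$ and $Q=K=\m$, the relevant associated primes $(X),(Y),(X+Y)$ of $G(\m)$ between them contain every nonzero element of $\m/\m^2$, so no element of $\m\setminus\m^2$ is superficial. The cited corollary in Swanson--Huneke carries an infinite-residue-field hypothesis which the paper's statement (and your proof) should inherit; since every application in the paper first reduces to the complete, infinite-residue-field case, nothing downstream is affected, but your base-change step should be deleted rather than presented as a genuine reduction.

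Second, your justification that the image of $P\cap G(Q)_1$ in $W$ is a proper subspace leans on ``$G(Q)_0=R/Q$ is Artinian local, so $\m/Q$ is nilpotent.'' The lemma only assumes $Q\subseteq K$ with $K$ $\m$-primary, so $Q$ need not be $\m$-primary and $R/Q$ need not be Artinian. This is harmless: since $P\cap G(Q)_1$ is a proper $R$-submodule of the finitely generated module $Q/Q^2$ and $Q^2\subseteq\m Q$, Nakayama gives $P\cap G(Q)_1+\m Q/Q^2\neq Q/Q^2$, which is exactly the properness you need --- the same argument you already use for the $I_i$. (Your treatment of $F_K(Q)$ is fine as written, since $R/K$ really is Artinian local.)
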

\begin{proof}
 The proof is similar to the proof of \cite[Corollary 8.5.9]{swanson-huneke}.
\end{proof}
The Hilbert coefficients $g_i(Q)$ behave nicely on reducing modulo $F_K(Q)$-superficial element. 
We need a refined version of \cite[Lemma 3.5]{Jayanthan&Verma1}. 

\begin{lemma}\label{lemma-gi-for-induction}
Let $(R,\m)$ be a Noetherian local ring of dimension $d >0$. Let $Q$ be an $\m$-primary ideal of $R$ and $K$ an ideal of $R$ such that $Q\subseteq K$.
Let $x\in Q$ such that $x^*$ is $G(Q)$-superficial and $x^o$ is $F_K(Q)$-superficial. Let $g_i(\bar{Q})$ denote the coefficients of the 
polynomial $P_{\bar{K}}(\bar{Q},n)$, where $\bar{Q}$ (resp. $\bar{K}$) denotes the image of an ideal $Q$ (resp. $K$) in $R/(x).$  Then
\begin{eqnarray*}
g_i(\bar{Q})= \begin{cases}
               g_i(Q) & \text{for}\ 0\leq i\leq d-2  \\
		g_{d-1}(Q)+(-1)^{d-1}\ell(0:x) & \text{for}\ i=d-1.
              \end{cases}
\end{eqnarray*}
\end{lemma}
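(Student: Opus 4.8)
The strategy is to compare the Hilbert function $H_{\bar K}(\bar Q, n) = \ell(R/(x) \,/\, \bar K \bar Q^n)$ with $H_K(Q,n) = \ell(R/KQ^n)$ using the exact sequence that comes from multiplication by $x$. Since $x^o$ is $F_K(Q)$-superficial and $x^*$ is $G(Q)$-superficial, Lemma \ref{lemma-sup-elt}\ref{lemma-sup-elt-b} gives an integer $c>0$ with $(KQ^n : x) \cap Q^c = KQ^{n-1}$ for all $n > c$. The key point is to produce, for $n \gg 0$, a short exact sequence of $R$-modules
\begin{equation*}
0 \longrightarrow \frac{(KQ^n : x)}{KQ^{n-1}} \longrightarrow \frac{R}{KQ^{n-1}} \xrightarrow{\ \cdot x\ } \frac{R}{KQ^n} \longrightarrow \frac{R}{(x) + KQ^n} \longrightarrow 0,
\end{equation*}
where the map is induced by multiplication by $x$; here $R/((x)+KQ^n) = R/(x)\,/\,\bar K\bar Q^n$ has length $H_{\bar K}(\bar Q,n)$. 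Taking alternating sums of lengths yields, for $n \gg 0$,
\begin{equation*}
H_{\bar K}(\bar Q, n) = H_K(Q,n) - H_K(Q,n-1) + \ell\!\left(\frac{(KQ^n : x)}{KQ^{n-1}}\right).
\end{equation*}

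**Controlling the correction term.** The next step is to show $\ell\big((KQ^n:x)/KQ^{n-1}\big) = \ell(0:x)$ for all $n \gg 0$. Since $x^*$ is $G(Q)$-superficial, $x$ is a nonzerodivisor on $G(Q)$ in high degrees, hence on $R$ modulo large powers of $Q$; combined with the Artin–Rees-type bound from $(KQ^n:x)\cap Q^c = KQ^{n-1}$, one deduces that $(0:x) \subseteq KQ^{n-1}$ fails in general, so instead one argues as follows. Multiplication by $x$ gives $(KQ^n : x)/KQ^{n-1} \cong x\cdot((KQ^n:x)/KQ^{n-1}) \subseteq (xR \cap KQ^n)/xKQ^{n-1}$, and the superficiality condition forces $xR \cap KQ^n = x\,KQ^{n-1}$ for $n\gg 0$ once one also accounts for the kernel: more precisely there is an exact sequence $0 \to (0:x) \to (KQ^n:x)/KQ^{n-1}$... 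Actually the clean route is to use the snake lemma on multiplication-by-$x$ between the filtrations $\{KQ^n\}$ and $\{KQ^{n-1}\}$: the kernel of $\cdot x : R/KQ^{n-1} \to R/KQ^n$ stabilizes. Since $(KQ^n : x) \supseteq (0:x) + KQ^{n-1}$ always, and for $n > c$ equality $(KQ^n:x)\cap Q^c = KQ^{n-1}$ shows $(KQ^n:x) = (0:x) + KQ^{n-1}$ for $n \gg 0$ (as $(0:x)$ is annihilated by a power of $Q$, hence contained in $Q^c$ is false — but $(0:x)\cap Q^c \subseteq (KQ^n:x)\cap Q^c = KQ^{n-1}$, so $(0:x) \cap Q^c$ is eventually constant and actually zero for large $c$ is not forced either). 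The correct statement to extract is that $\ell\big((KQ^n:x)/KQ^{n-1}\big)$ is eventually a constant, call it $\lambda$, and then a separate identification shows $\lambda = \ell(0:x)$; this identification I expect to be the main obstacle and is presumably where the "refinement" of \cite[Lemma 3.5]{Jayanthan&Verma1} lies — one must show the stable value equals $\ell(0:x)$ exactly, not merely that the difference of Hilbert polynomials is corrected by a constant.

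**Extracting the coefficients.** Once $H_{\bar K}(\bar Q,n) = H_K(Q,n) - H_K(Q,n-1) + \ell(0:x)$ for $n \gg 0$, I pass to Hilbert polynomials: $P_{\bar K}(\bar Q,n) = P_K(Q,n) - P_K(Q,n-1) + \ell(0:x)$. Using the binomial expansion \eqref{Hilbert-S-poly-for-K} and the identity $\binom{n+d-i-1}{d-i} - \binom{n+d-i-2}{d-i} = \binom{n+d-i-2}{d-i-1}$, the difference $P_K(Q,n) - P_K(Q,n-1)$ is a polynomial of degree $d-1$ whose $j$-th coefficient (in the $R/(x)$-indexing, $0 \le j \le d-1$) equals $(-1)^j g_j(Q)$; the constant $\ell(0:x)$ contributes only to the lowest-degree term, i.e. the coefficient of $\binom{n}{0}$, which carries sign $(-1)^{d-1}$ in the convention for $g_{d-1}(\bar Q)$. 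Matching coefficients in $P_{\bar K}(\bar Q,n) = \sum_{j=0}^{d-1} (-1)^j g_j(\bar Q)\binom{n+d-j-2}{d-1-j}$ then gives $g_i(\bar Q) = g_i(Q)$ for $0 \le i \le d-2$ and $g_{d-1}(\bar Q) = g_{d-1}(Q) + (-1)^{d-1}\ell(0:x)$, which is the claim. The only delicate sign-bookkeeping is in the top index, and it matches because $\ell(0:x)$ enters with a $+$ in the length formula while the $g_{d-1}$ coefficient sits with sign $(-1)^{d-1}$.
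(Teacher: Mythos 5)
Your opening move (the four-term exact sequence induced by multiplication by $x$, giving $\ell(R/\bar K\bar Q^n)=\ell(R/KQ^n)-\ell(R/KQ^{n-1})+\ell\bigl((KQ^n:x)/KQ^{n-1}\bigr)$) and your closing coefficient comparison are both correct and are exactly what the paper does. But there is a genuine gap in the middle: you never establish the identity $\ell\bigl((KQ^n:x)/KQ^{n-1}\bigr)=\ell(0:x)$ for $n\gg 0$. You explicitly flag it as ``the main obstacle,'' cycle through several abandoned starts, and end by asserting only that the length is eventually constant and that ``a separate identification'' should show the constant equals $\ell(0:x)$. That separate identification is precisely the content of the lemma; without it the stated formula for $g_{d-1}(\bar Q)$ is unproved.

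The missing argument is short but has two distinct ingredients, both of which your sketch circles around without landing on. First, apply Artin--Rees to the submodule $(x)$: there is $k$ with $KQ^n\cap (x)\subseteq xQ^{n-k}$ for $n>k$. Then for $y\in (KQ^n:x)$ with $n>k+c$ one writes $yx=zx$ with $z\in Q^{n-k}\subseteq Q^c$; since $z\in (KQ^n:x)\cap Q^c=KQ^{n-1}$ (Lemma \ref{lemma-sup-elt}\ref{lemma-sup-elt-b}) and $y-z\in (0:x)$, this gives $(KQ^n:x)=KQ^{n-1}+(0:x)$ for $n\gg 0$ --- the inclusion $\supseteq$ is the trivial one you noted, but the inclusion $\subseteq$ requires this Artin--Rees step. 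Second, one must show $KQ^{n}\cap(0:x)=0$ for $n\gg 0$, so that the length of $\bigl(KQ^{n-1}+(0:x)\bigr)/KQ^{n-1}\cong (0:x)/\bigl(KQ^{n-1}\cap(0:x)\bigr)$ is exactly $\ell(0:x)$ and not something smaller: any $u\in KQ^n\cap(0:x)$ lies in $Q^c$ and in $(KQ^m:x)$ for every $m$, hence in $(KQ^m:x)\cap Q^c=KQ^{m-1}$ for all $m>c$, hence in $\bigcap_{m}KQ^{m-1}=0$ by Krull's intersection theorem. Your text even asserts (parenthetically, and without resolution) that neither $(0:x)\subseteq Q^c$ nor $(0:x)\cap Q^c=0$ is forced; the point is that neither is needed --- only the intersection with $KQ^{n}$ for large $n$ must vanish, and that follows as above. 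With these two facts the proof closes exactly as you describe in your final paragraph.
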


\begin{proof}
 For $n\in \ints$, the exact sequence
 $$0\to \frac{KQ^n:x}{KQ^{n-1}}\to R/KQ^{n-1}\xrightarrow{x} R/KQ^{n}\to R/(KQ^n,x)\to 0$$
gives that $\ell(R/\bar{K}\bar{Q}^n)=\ell(R/KQ^n)-\ell(R/KQ^{n-1})+\ell((KQ^n:x)/KQ^{n-1})$ for all $n\in \ints$.
\\

 {\bf{Claim}} : $KQ^n\cap (0:x)=0$ and $(KQ^n:x)=KQ^{n-1}+(0:x)$ for $n\gg 0$.

 By Lemma \ref{lemma-sup-elt}\ref{lemma-sup-elt-b}, there exists an integer $c>0$ such that $(KQ^n:x)\cap Q^c=KQ^{n-1}$ for all $n>c$. So for all $n > c$,
 $KQ^n\cap (0:x)\subseteq \mathop{\cap}\limits_{m\geq n} (KQ^m :x)\cap Q^c = \mathop{\cap}\limits_{m\geq n} KQ^{m-1}=0.$
 By the Artin-Rees lemma, there exists an integer $k$ such that 
$KQ^n\cap (x)\subseteq (x)Q^{n-k}$ for all $n>k$.  
Let  $n >k+c$ and $y\in (KQ^n:x)$. Then $yx\in KQ^n\cap (x)\subseteq xQ^{n-k}$. Suppose $yx=zx$ for some $z\in Q^{n-k}$. 
 Hence $z\in (KQ^n:x)\cap Q^c=KQ^{n-1}$. Therefore $y=z+(y-z)\in KQ^{n-1}+(0:x)$.
  \\
  
Thus $\ell(R/\bar{K}\bar{Q}^n)=\ell(R/KQ^n)-\ell(R/KQ^{n-1})+\ell(0:x)$ for all $n\gg 0$. Hence, for $n \in \ints$,
\begin{equation} \label{equation:HilbpolyInRAndRbar}
P_{\bar{K}}(\bar{Q},n)= P_K(Q,n)-P_K(Q,n-1)+\ell(0:x). 
\end{equation}
Now, the result follows by comparing the coefficients of both sides of equation \eqref{equation:HilbpolyInRAndRbar}.
 \end{proof}
\begin{remark}
 If $x\in Q$ is a regular element, then we obtain \cite[Lemma 3.5]{Jayanthan&Verma1}. 
\end{remark}

We recall the following proposition from \cite{Clare} which determines the value of $g_i(Q)$ for parameter ideals in a Cohen-Macaulay local ring. 
There is a misprint in the statement of \cite[Theorem 7.2(2)]{Clare}. 
However the following version of the statement follows from the proof of \cite[Theorem 7.2]{Clare}. 
We provide an example below with $g_i(Q)=(-1)^i\ell(R/K)$ for $1\leq i\leq d$.

\begin{proposition}{\rm\cite[Theorem 7.2(2)]{Clare}}\label{prop-g1Q-in-CMring}
 Let $R$ be a Cohen-Macaulay local ring of dimension $d>0$ and $Q$ a parameter ideal of $R$. Let $K$ be an ideal such that $Q\subseteq K$. 
 Then $g_i(Q)=(-1)^i\ell(R/K)$ for $1\leq i\leq d$.
 \end{proposition}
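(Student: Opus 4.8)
The plan is to reduce to the easy base case $d=1$ by induction, using a superficial element that is also a nonzerodivisor. Since $R$ is Cohen-Macaulay, a parameter ideal $Q$ is generated by a regular sequence, so $Q$ certainly contains a nonzerodivisor; by Lemma \ref{lemma-existence-sup-elt-avoiding-ideals} we may pick $x\in Q\setminus\m Q$ which is a nonzerodivisor with $x^*$ $G(Q)$-superficial and $x^o$ $F_K(Q)$-superficial. Then $R/(x)$ is again Cohen-Macaulay of dimension $d-1$, $\bar Q$ is a parameter ideal of $R/(x)$, and $\bar Q\subseteq \bar K$. Crucially, since $x$ is regular, $\ell(0:x)=0$, so Lemma \ref{lemma-gi-for-induction} gives $g_i(\bar Q)=g_i(Q)$ for all $0\le i\le d-1$; in particular $g_i(Q)=g_i(\bar Q)$ for $1\le i\le d-1$. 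When $d\ge 2$ we also need to recover $g_d(Q)$ after the induction drops us to dimension $d-1$, so the induction should be set up to prove the full range $1\le i\le d$ at once and one checks that the top coefficient $g_d$ behaves correctly under the reduction — this is the one place where a little care is needed rather than a pure black-box appeal, since Lemma \ref{lemma-gi-for-induction} as stated only tracks $g_i$ for $i\le d-1$.

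For the base case $d=1$, I would compute directly. With $x$ a regular parameter generating $Q$ (so $Q=(x)$; more generally $Q$ is a principal parameter ideal in the Cohen-Macaulay dimension-one ring, hence generated by a single nonzerodivisor), Lemma \ref{lemma-sup-elt}\ref{lemma-sup-elt-b} gives $(KQ^n:x)=KQ^{n-1}$ for $n\gg 0$, and the exact sequence
\[
0\to R/KQ^{n-1}\xrightarrow{\ x\ } R/KQ^n\to R/(KQ^n,x)\to 0
\]
yields $\ell(R/KQ^n)-\ell(R/KQ^{n-1})=\ell(R/(KQ^n,x))=\ell(R/(K\cap(x)^{\,?}\dots))$; more simply, since $Q=(x)$ and $x$ is regular one gets $KQ^n=x^nK$ and $\ell(R/x^nK)=\ell(R/x^n)+\ell(x^n/x^nK)=n\,e_0(Q)+\ell(R/K)$. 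Comparing with $P_K(Q,n)=g_0(Q)(n+0)-g_1(Q)$, i.e. $P_K(Q,n)=g_0(Q)\,n-g_1(Q)$ in dimension one, identifies $g_1(Q)=-\ell(R/K)=(-1)^1\ell(R/K)$, as claimed. (One also records $g_0(Q)=e_0(Q)$, which is standard and not part of the assertion.)

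For the inductive step with $d\ge 2$: assume the result in dimension $d-1$. Applying it to $R/(x)$ gives $g_i(\bar Q)=(-1)^i\ell(R/\bar K)$ for $1\le i\le d-1$, and $\ell(R/\bar K)=\ell(R/K)$ since $x\in Q\subseteq K$ forces $\bar K = K/(x)$ with $R/\bar K\cong R/K$. Combined with $g_i(Q)=g_i(\bar Q)$ for $1\le i\le d-1$ (from $\ell(0:x)=0$ in Lemma \ref{lemma-gi-for-induction}), this settles $g_i(Q)=(-1)^i\ell(R/K)$ for $1\le i\le d-1$. To get the top coefficient $g_d(Q)$, I would redo the length computation at the level of $R$ rather than relying on the inductive hypothesis: the difference equation $P_K(Q,n)-P_K(Q,n-1)=P_{\bar K}(\bar Q,n)$ from \eqref{equation:HilbpolyInRAndRbar} (with $\ell(0:x)=0$) determines $P_K(Q,n)$ up to its constant term, and the constant term is $(-1)^d g_d(Q)$; pinning it down requires one extra evaluation, e.g. comparing $P_K(Q,0)$ with an explicit computation of $\ell(R/K)$ using that $Q$ is generated by a regular sequence in the Cohen-Macaulay ring $R$. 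I expect \textbf{this last point — nailing the constant term $g_d(Q)$ — to be the main obstacle}, since the superficial-element reduction is transparent for $g_1,\dots,g_{d-1}$ but genuinely needs a separate global argument (a Koszul-homology or associated-graded-ring computation showing $G(Q)$ is a polynomial ring over $R/Q$, so that $P_K(Q,n)$ is exactly computable) to reach the top index $i=d$. Everything else is bookkeeping with the exact sequences already displayed in the excerpt.
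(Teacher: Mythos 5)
Your strategy (superficial nonzerodivisor plus induction on $d$) differs from the paper's, which simply cites the explicit formula \cite[Equation 24]{Clare} for $\ell(R/KQ^n)$ in the Cohen--Macaulay case and reads off all the coefficients at once. Your base case $d=1$ is correct, and the inductive step via Lemma \ref{lemma-gi-for-induction} with $\ell(0:x)=0$ does yield $g_i(Q)=(-1)^i\ell(R/K)$ for $1\le i\le d-1$. But the top coefficient $g_d(Q)$, which you rightly flag, is a genuine gap as written: the difference equation $P_{\bar K}(\bar Q,n)=P_K(Q,n)-P_K(Q,n-1)$ annihilates the constant term $(-1)^dg_d(Q)$, so no amount of inducting recovers it, and your ``one extra evaluation at $n=0$'' only works once you know that $H_K(Q,n)$ agrees with $P_K(Q,n)$ for \emph{all} $n\ge 0$ --- which is itself the nontrivial content here.

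The missing step is exactly the computation the paper outsources to \cite{Clare}, and once you do it the induction becomes unnecessary. Since $Q=(x_1,\dots,x_d)$ is generated by a regular sequence, $G(Q)\cong (R/Q)[X_1,\dots,X_d]$, and the syzygies of the degree-$n$ monomials generating $Q^n$ are generated by the Koszul relations, whose entries lie in $Q\subseteq K$. Hence the natural surjection $(R/K)^{\binom{n+d-1}{d-1}}\to Q^n/KQ^n$ is injective, giving $\ell(Q^n/KQ^n)=\binom{n+d-1}{d-1}\ell(R/K)$ and
\[
\ell(R/KQ^n)=\ell(R/Q)\binom{n+d-1}{d}+\ell(R/K)\binom{n+d-1}{d-1}\quad\text{for all }n\ge 0 .
\]
Expanding $\binom{n+d-1}{d-1}=\sum_{i=1}^{d}\binom{n+d-i-1}{d-i}$ and comparing with \eqref{Hilbert-S-poly-for-K} gives $g_0(Q)=\ell(R/Q)$ and $(-1)^ig_i(Q)=\ell(R/K)$ for all $1\le i\le d$ simultaneously, including $i=d$. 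If you want to keep your inductive architecture, you still need this (or an equivalent Koszul/associated-graded argument) for the final coefficient, so you may as well use it outright.
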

 \begin{proof}
 Follows from comparing the coefficients of \cite[Equation 24]{Clare}.
\end{proof}
 \begin{example}\label{example-e1}
    Let $R=k[[x,y]]$ be a power series ring. Let $Q=(x^3,x^2y,y^3)$ and $K=\m^2$. Then $J=(x^3,y^3)$ is a minimal reduction of
  $Q$. We use Huneke's fundamental lemma \cite[Corollary 3.3]{Jayanthan&Verma1} to compute $g_0(Q)$ and $g_1(Q)$. Recall that 
  \begin{eqnarray*}
   g_1(Q)=&\sum\limits_{n\geq 1}v_n \mbox{~~and~~} g_2(Q)=\sum_{n\geq 1}(n-1)v_n+\ell(R/K) \mbox{~~where~~}\\
   v_n=&\begin{cases}
               e_0(Q) & \text{if}\ n= 0,\\
               e_0(Q)-\ell(R/KQ)+\ell(R/K) & \text{if}\ n=1,\\
               \ell(KQ^n/KJQ^{n-1})-\ell((KQ^{n-1}:J)/KQ^{n-2}) & \text{if}\ n \geq 2.
             \end{cases}
  \end{eqnarray*}
Therefore $$P_K(Q,n)=9\binom{n+1}{2}+3n+3.$$ So, we see that $\ell(R/K)=3=(-1)^ig_i(Q)$ for $i=1,2$.
 \end{example}
\begin{remark}\leavevmode
\noindent
1. Since $\ell(R/KQ^n)=\ell(R/Q^n)+\ell(Q^n/KQ^n)$ for all integers $n$, we have 
$P_K(Q,n)=P(Q,n)+P(F,n)$ for all integers $n.$ Thus comparing the coefficients of both sides, we get
\begin{eqnarray}
\label{equation relating g0 and f0} g_0(Q)&=&e_0(Q) \mbox{~~~and}\\ 
 f_i(Q)&=&e_{i+1}(Q)-g_{i+1}(Q)+e_i(Q)-g_i(Q) \text{~~for $0\leq i\leq d-1$}.\label{eqn-fi-gi-ei}
 \end{eqnarray}
 2. By putting $K=Q$ in \eqref{Hilbert-S-poly-for-K}, we see that $P_K(Q,n)=\ell(R/Q^{n+1})=P(Q,n+1)$ for all $n\gg 0$. Comparing the coefficients,
we get for $0\leq i\leq d$
\begin{equation}\label{eqn-g1-g0-e1}
 g_i(Q)=e_i(Q)-e_{i-1}(Q)+\ldots+(-1)^ie_0(Q).
\end{equation}
\end{remark} 
 
 \section{A characterization of Cohen-Macaulayness}\label{section_thevanishingthm}
 In this section, we first obtain a formula for the first Hilbert coefficient $g_1(Q)$ in a one dimensional Noetherian local ring.
 Then we give a characterization of Cohen-Macaulayness of an unmixed local ring in terms of the coefficients $g_0(Q)$ and $g_1(Q)$.
 This generalizes, in some sense, a result of Ghezzi et al. \cite[Theorem 2.1]{vanishing1}. We further discuss the results
 reminiscent of those in \cite[Section 2]{vanishing1}. 
 \begin{definition}
 Let $(0)=\mathop{\cap}\limits_{p\in\ass(R)}Q(p)$ be a primary decomposition of $(0)$ in $R$ and let
 $\assh(R)=\{p\in \ass(R)~|~\text{dim~} R/p=d\}$. The ideal  $U_R(0)=\mathop{\cap}\limits_{p\in\assh(R)}Q(p)$ is called the {\it unmixed component} of 
 $(0)$ in $R$. A ring $R$ is called {\it unmixed}
 if $\ass(\hat{R})=\assh(\hat{R})$ where $\hat{R}$ is the $\m$-adic completion of $R$. 
\end{definition} 

In \cite[Conjecture 1]{vanishing-conjecture}, Vasconcelos conjectured that $e_1(Q) < 0$ for every ideal $Q$ that is generated by 
system of parameters if and only if $R$ is not Cohen-Macaulay. 
In \cite[Theorem 2.1]{vanishing1} and \cite[Theorem 3.1]{vanishing2}, Ghezzi et al. proved that if $R$ is unmixed and $e_1(Q)\geq 0$ for some 
parameter ideal $Q$, then $R$ is Cohen-Macaulay. Motivated by this we ask:

\begin{question}\label{ques-1}
If $R$ is unmixed of dimension $d >0$ and $g_1(Q)\geq-\ell(R/K)$ for some parameter ideal $Q,$ then is $R$ Cohen-Macaulay ?
\end{question} 

The following example, which is worked out in \cite[Example 3.8]{Lori}, shows that the answer to Question \ref{ques-1} can be negative. 
\begin{example} \cite[Example 3.8]{Lori}\label{example-1}
 Let $R=k[[x^5,xy^4,x^4y,y^5]]\cong k[[t_1,t_2,t_3,t_4]]/J,$ where $J=(t_2t_3-t_1t_4,~t_2^4-t_3t_4^3,~t_1t_2^3-t_3^2t_4^2, ~t_1^2t_2^2-t_3^3t_4,
  ~ t_1^3t_2-t_3^4,~ t_3^5-t_1^4t_4)$. Then $\dim R=2$. Let $Q=(x^5,y^5).$ We have  
  $$P(Q,n)=5\binom{n+1}{2}+2n.$$
  Let $K=Q.$ Then, by \eqref{eqn-g1-g0-e1}, $g_1(Q)=e_1(Q)-e_0(Q)=-7 \geq  -\ell(R/K)$ but $\depth R=1.$ 
  \end{example}

\begin{remark}
The answer to Question \ref{ques-1} is affirmative for $K=\m.$ Indeed, suppose $K=\m$ and $g_1(Q)\geq -\ell(R/\m)=-1$ for some parameter ideal $Q.$ 
From \eqref{equation relating g0 and f0} and \eqref{eqn-fi-gi-ei}, we get that $e_1(Q)-g_1(Q)=f_0(Q)\geq 1$. Hence $e_1(Q)\geq 0$ 
which implies that $R$ is Cohen-Macaulay by \cite[Theorem 2.1]{vanishing1}. 
\end{remark}

For an arbitrary $K$, we prove in Theorem \ref{thm-vanishing} that 
an unmixed local ring is Cohen-Macaulay if $g_1(Q)\geq -\ell(R/K)+(\ell(R/Q)-g_0(Q))$ for some parameter ideal $Q\subseteq K$.
Since $\ell(R/Q)-g_0(Q)\geq 0$, Theorem \ref{thm-vanishing} is weaker than a complete solution of Question \ref{ques-1}.
We first discuss some results on $g_1(Q)$ for a parameter ideal $Q$ in a one dimensional ring. 

For an $\m$-primary ideal $I$ in $R$ and a finitely generated $R$-module $M$ of dimension $g,$ we write the {\it Hilbert-Samuel polynomial} of $M$ 
with respect to $I$ as
\begin{eqnarray*}
  P_M(I,n)=\sum\limits_{i=0}^{g}(-1)^i e_i(I,M)\binom{n+g-i-1}{g-i}.
\end{eqnarray*}

\begin{proposition}\label{prop-g1-in-d=1}
   Let $(R,\m)$ be a Noetherian local ring of dimension one and $Q$ a parameter ideal of $R$. Let $K$ be an $\m$-primary ideal.
   Then $g_1(Q)=-\ell(R/K)-\ell(H_\m^0(K))$.
 \end{proposition}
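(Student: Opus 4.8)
The plan is to use that in dimension one a parameter ideal is principal, say $Q=(x)$ with $x\in\m$ and $\dim R/(x)=0$ (equivalently $\sqrt{(x)}=\m$), and to compute $\ell(R/KQ^n)$ for $n\gg 0$ by comparing it with the Hilbert--Samuel function of $Q$.

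First I would set $W=H_\m^0(R)$ and observe that $W=\bigcup_{n\ge 1}(0:_R x^n)$, since $\sqrt{(x)}=\m$; as $R$ is Noetherian the ascending chain $\{(0:_R x^n)\}$ stabilizes, so $(0:_R x^n)=W$ for all $n\gg 0$. For such $n$, multiplication by $x^n$ induces an isomorphism $R/W\to Q^n=x^nR$ which carries the image $(K+W)/W$ of $K$ onto $KQ^n=x^nK$; hence $Q^n/KQ^n\cong R/(K+W)$, a module whose length $\ell(R/(K+W))$ is finite (as $K$ is $\m$-primary) and independent of $n$. Since $\ell(R/KQ^n)=\ell(R/Q^n)+\ell(Q^n/KQ^n)$ for every $n$, this gives $P_K(Q,n)=P(Q,n)+\ell(R/(K+W))$ for $n\gg 0$, and comparing coefficients (the leading ones agree, $g_0(Q)=e_0(Q)$) yields
\[
g_1(Q)=e_1(Q)-\ell\big(R/(K+W)\big).
\]

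Next I would compute $e_1(Q)$. Put $R'=R/W$; then $\dim R'=1$ and $H_\m^0(R')=0$, so $R'$ is Cohen--Macaulay. From $0\to W\to R\to R'\to 0$ one gets $\ell(R/Q^n)=\ell(R'/Q^nR')+\ell\big(W/(W\cap Q^n)\big)$, and by the Artin--Rees lemma together with $\m^tW=0$ for $t\gg 0$ (valid since $W$ has finite length) we have $W\cap Q^n=0$ for $n\gg 0$; hence $P(Q,n)=P_{R'}(QR',n)+\ell(W)$ for $n\gg 0$. Since $R'$ is one-dimensional Cohen--Macaulay, $\ell(R'/Q^nR')=e_0(QR',R')\cdot n$ for $n\gg 0$, so the degree-one Hilbert polynomial $P_{R'}(QR',n)$ has zero constant term, i.e. $e_1(QR',R')=0$; therefore $e_1(Q)=-\ell(W)=-\ell(H_\m^0(R))$.

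Finally I would combine the two facts using the length identity $\ell(R/(K+W))=\ell(R/K)-\ell\big((K+W)/K\big)=\ell(R/K)-\ell(W)+\ell(W\cap K)$, where I use $(K+W)/K\cong W/(W\cap K)$ and that $W$ has finite length, together with $W\cap K=H_\m^0(K)$. This gives
\[
g_1(Q)=-\ell(W)-\big(\ell(R/K)-\ell(W)+\ell(H_\m^0(K))\big)=-\ell(R/K)-\ell(H_\m^0(K)),
\]
as claimed. The only delicate points are the stabilization $(0:_R x^n)=W$ and the vanishing $W\cap Q^n=0$ for $n\gg 0$ (both routine, via Noetherianity and Artin--Rees) and the identification $Q^n/KQ^n\cong R/(K+W)$; the rest is bookkeeping with lengths. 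I expect the identification $Q^n/KQ^n\cong R/(K+W)$ and the accompanying justification that it becomes independent of $n$ to be the step requiring the most care.
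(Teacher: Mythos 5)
Your argument is correct, but it takes a different route from the paper. The paper splits $\ell(R/KQ^n)=\ell(R/K)+\ell(K/KQ^n)$, views $K$ as a one-dimensional $R$-module (after checking $H_\m^1(K)\neq 0$ via the surjection $H_\m^1(K)\to H_\m^1(R)$), reads off $g_1(Q)=-\ell(R/K)+e_1(Q,K)$, and then quotes the module-level result of Mandal--Singh--Verma that $e_1(Q,K)=-\ell(H_\m^0(K))$ for a one-dimensional module. You instead split the other way, $\ell(R/KQ^n)=\ell(R/Q^n)+\ell(Q^n/KQ^n)$, and make the fiber-cone contribution explicit: writing $Q=(x)$ and $W=H_\m^0(R)=(0:_Rx^n)$ for $n\gg0$ (so $x^nW=0$), multiplication by $x^n$ gives $Q^n/KQ^n\cong R/(K+W)$, whence $g_1(Q)=e_1(Q)-\ell(R/(K+W))$; you then reprove $e_1(Q)=-\ell(W)$ for the ring itself by passing to $R/W$, and finish with the length identity $\ell(R/(K+W))=\ell(R/K)-\ell(W)+\ell(K\cap W)$ together with $K\cap W=H_\m^0(K)$. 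All steps check out (the only points needing care, the stabilization $(0:_Rx^n)=W$ and $W\cap Q^n=0$ for $n\gg0$, are handled as you indicate). What your version buys is self-containedness and an explicit computation of the eventual value $\ell(R/(K+W))$ of $\ell(Q^n/KQ^n)$, i.e.\ of $f_0(Q)$ in dimension one; what the paper's version buys is brevity and a formulation that generalizes directly to Hilbert coefficients of modules, since it delegates the local-cohomology content to a cited result about $e_1$ of a one-dimensional module.
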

\begin{proof} 
 The short exact sequence 
 $$0 \to K \to R \to R/K \to 0$$
induces a surjective map from $H_{\m}^1(K)$ to $H_{\m}^1(R).$ Since $H_{\m}^1(R) \neq 0,$ we get that $H_{\m}^1(K) \neq 0.$
Thus $K$ is an $R$-module of dimension one. 
For all integers $n,$ we have 
 $$\ell(R/KQ^n)=\ell(R/K)+\ell(K/KQ^n).$$ 
Therefore 
 \begin{equation*}\label{eqn-e3}
  g_1(Q)=-\ell(R/K)+e_1(Q,K)=-\ell(R/K)-\ell(H_\m^0(K))
 \end{equation*}
where the last equality follows from 
 \cite[Proposition 3.1]{Mandal-Singh-Verma}.
\end{proof}

\begin{proposition}\label{prop-lemma for main result}
 Let $(R,\m)$ be a Noetherian local ring of dimension one and $Q$ a parameter ideal of $R$. Let $K$ be an ideal
  such that $Q\subseteq K$. Suppose $g_0(Q)+g_1(Q)\geq -\ell(R/K)+\ell(R/Q)$. Then $R$ is Cohen-Macaulay.
\end{proposition}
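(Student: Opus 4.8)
Since $\dim R=1$, a parameter ideal of $R$ has the form $Q=aR$ for a single parameter $a\in\m$, and the goal is equivalent to showing that $a$ is a nonzerodivisor: $R$ is Cohen--Macaulay $\iff\depth R\geq 1\iff a$ is regular on $R\iff\ell(0:_Ra)=0$. (Here $(0:_Ra)$ has finite length: since $(a)$ is $\m$-primary, $\m^N\subseteq(a)$ for some $N$, so $(0:_Ra)\subseteq(0:_R\m^N)$, a module over the Artinian ring $R/\m^N$.) Thus the plan is to extract the equality $\ell(0:_Ra)=0$ from the numerical hypothesis.

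The first step is to rewrite $g_0(Q)$ and $g_1(Q)$ as lengths. From \eqref{equation relating g0 and f0} we have $g_0(Q)=e_0(Q)$, and by the classical multiplicity formula for a parameter in a one-dimensional local ring (a special case of the Auslander--Buchsbaum identity $e((a),R)=\chi((a),R)=\ell(R/aR)-\ell(0:_Ra)$; see \cite{bruns-herzog}) we get $e_0(Q)=\ell(R/aR)-\ell(0:_Ra)=\ell(R/Q)-\ell(0:_Ra)$. For $g_1(Q)$, note that since $Q$ is $\m$-primary and $Q\subseteq K$, the ideal $K$ is $\m$-primary (the degenerate case $K=R$ only strengthens the inequality below and is handled the same way, since then $g_i(Q)=e_i(Q)$ and the formula $g_1(Q)=-\ell(R/K)-\ell(H_\m^0(K))$ still holds by \cite[Proposition 3.1]{Mandal-Singh-Verma}), so Proposition~\ref{prop-g1-in-d=1} applies and gives $g_1(Q)=-\ell(R/K)-\ell(H_\m^0(K))$. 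Adding the two identities,
\[
g_0(Q)+g_1(Q)=\bigl(\ell(R/Q)-\ell(R/K)\bigr)-\ell(0:_Ra)-\ell(H_\m^0(K)).
\]

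Now I would substitute this into the hypothesis $g_0(Q)+g_1(Q)\geq -\ell(R/K)+\ell(R/Q)$ and cancel the common term $\ell(R/Q)-\ell(R/K)$ from both sides, which leaves $-\ell(0:_Ra)-\ell(H_\m^0(K))\geq 0$. As both lengths are nonnegative, they must both vanish; in particular $\ell(0:_Ra)=0$, so $a$ is a nonzerodivisor and $R$ is Cohen--Macaulay (and as a byproduct $H_\m^0(K)=0$). The only non-formal ingredient is the multiplicity identity $e_0(Q)=\ell(R/Q)-\ell(0:_Ra)$, which is the heart of the matter; if one prefers not to quote it, it can be derived directly by passing to $\bar R=R/H_\m^0(R)$, on which $\bar a$ becomes a regular parameter of the one-dimensional Cohen--Macaulay ring $\bar R$, so that $e_0(Q)=\ell(\bar R/\bar a\bar R)$, and then comparing lengths through the short exact sequence $0\to H_\m^0(R)/(H_\m^0(R)\cap aR)\to R/aR\to\bar R/\bar a\bar R\to 0$ (this variant replaces ``$\ell(0:_Ra)=0$'' at the end by ``$H_\m^0(R)\subseteq aR$ and $H_\m^0(R)\cap K=0$'', which together with $aR\subseteq K$ again force $H_\m^0(R)=0$). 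Everything else is bookkeeping with Proposition~\ref{prop-g1-in-d=1} and \eqref{equation relating g0 and f0}, so I do not anticipate a serious obstacle beyond recognizing that these two inputs suffice.
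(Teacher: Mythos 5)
Your proof is correct, but it follows a genuinely different route from the paper's. The paper passes to $\bar R=R/H_\m^0(R)$, shows via Artin--Rees that $P_K(Q,n)=P_{K\bar R}(Q\bar R,n)+\ell(H_\m^0(R))$ for all $n$, evaluates $g_0(Q\bar R)+g_1(Q\bar R)$ exactly from the Cohen--Macaulayness of $\bar R$ using Proposition~\ref{prop-g1Q-in-CMring}, bounds that sum above by $\ell(K/Q)=-\ell(R/K)+\ell(R/Q)$, and concludes $\ell(H_\m^0(R))\le 0$. You instead compute both coefficients in $R$ itself: $g_0(Q)=e_0(Q)=\ell(R/Q)-\ell(0:_Ra)$ by Serre's Euler-characteristic formula for a single parameter (this, rather than the Auslander--Buchsbaum formula, is the standard name for $e((a);R)=\chi(a;R)$, but the identity you invoke is the right one), and $g_1(Q)=-\ell(R/K)-\ell(H_\m^0(K))$ by Proposition~\ref{prop-g1-in-d=1}, which the paper proves immediately beforehand but does not actually use in its own proof of this proposition. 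Adding and cancelling against the hypothesis forces $\ell(0:_Ra)=\ell(H_\m^0(K))=0$, so the parameter is regular. Both arguments are short and valid; yours is arguably more informative, since it yields the exact one-dimensional identity $g_0(Q)+g_1(Q)=\ell(R/Q)-\ell(R/K)-\ell(0:_Ra)-\ell(H_\m^0(K))$, which simultaneously recovers the inequality of Corollary~\ref{corr-negativity-result-for-g1} for $d=1$ and gives $H_\m^0(K)=0$ as a byproduct, whereas the paper's argument stays entirely within the $g_i$-calculus it has already set up and does not need to import the multiplicity formula. Your explicit treatment of the degenerate case $K=R$ (necessary because Proposition~\ref{prop-g1-in-d=1} assumes $K$ is $\m$-primary, while the proposition only assumes $Q\subseteq K$) is a point the paper glosses over, and your justification that $(0:_Ra)$ has finite length is correct.
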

\begin{proof}
 Set $W:=H_\m^0(R)$ and $\bar{R}:=R/W$. Then $\bar{R}$ is a Cohen-Macaulay local ring of dimension one. Consider the following exact sequence
 $$0\to W/(KQ^n\cap W)\to R/KQ^n\to \bar{R}/KQ^n\bar{R}\to 0.$$
 By the Artin-Rees lemma, there exists an integer $k$ such that for all $n\gg 0$, $KQ^n\cap W\subseteq Q^n\cap W\subseteq Q^{n-k}W=0$. Hence for all $n\gg 0$, 
 \begin{eqnarray}
  \ell(R/KQ^n)&=&\ell(\bar{R}/KQ^n\bar{R})+\ell(W).  \nonumber
  \end{eqnarray}
  Thus 
  \begin{eqnarray}\label{eqn-R-to-Rbar}
   P_K(Q,n)=P_{\bar{K}}(\bar{Q},n)+\ell(W) \mbox{ for all integers }n.
  \end{eqnarray}
This implies that 
  \begin{eqnarray*}
  g_0(Q)&=&g_0(Q\bar{R}) \text{~~and~~} g_1(Q)=g_1(Q\bar{R})-\ell(W)\label{g1inbarR}\label{g1-in-d=1}.
  \end{eqnarray*}
  Therefore
  \begin{eqnarray} 
  g_0(Q\bar{R})+g_1(Q\bar{R})&\geq& -\ell(R/K)+\ell(R/Q)+\ell(W) \label{lb on g_0+g_1}.
  \end{eqnarray}
Since $\bar{R}$ is Cohen-Macaulay, using \eqref{equation relating g0 and f0}, we get that $g_0(Q\bar{R})=e_0(Q\bar{R})=\ell(\bar{R}/Q\bar{R}).$ 
By Proposition \ref{prop-g1Q-in-CMring}, $g_1(Q\bar{R})=-\ell(\bar{R}/K\bar{R})$.
Hence
\begin{eqnarray}
g_0(Q\bar{R})+g_1(Q\bar{R})&=& \ell(\bar{R}/Q\bar{R})-\ell(\bar{R}/K\bar{R}))\nonumber\\
 &=&\ell(R/(W+Q))-\ell(R/(W+K))=\ell(K/(Q+W)\cap K))\nonumber\\
 &\leq & \ell(K/Q)\nonumber\\
 &=& -\ell(R/K)+\ell(R/Q). \label{ub on g_0+g_1}
\end{eqnarray}
Comparing equations \eqref{lb on g_0+g_1} and \eqref{ub on g_0+g_1}, we get that $\ell(W)\leq 0$ which implies that $W=0.$ Thus $R$ is Cohen-Macaulay. 
\end{proof}

In order to prove Theorem \ref{thm-vanishing}, we need an analogue of \cite[Lemma 2.3]{vanishing1}.  

\begin{lemma}\label{lemma-for-g1-in-unmixed-rings}
 Let $(R,\m)$ be a Noetherian local ring of dimension $ d>0$ and $Q$ a parameter ideal of $R$. Let $K$ be an ideal
  such that $Q\subseteq K$. Suppose $U=U_R(0)\neq 0$ and $S=R/U$. Then the following assertions hold:
   \begin{enumerate}[label=(\alph*)]
   \item \label{lemma-for-g1-in-unmixed-rings-a} $\dim U<\dim R$.
   \item \label{lemma-for-g1-in-unmixed-rings-b} we have $g_0(Q)=g_0(QS)$ and  
   \begin{equation*}
    g_1(Q)=
    \begin{cases}
      g_1(QS) & \text{if }\dim U\leq d-2 \\
      g_1(QS)-s_0 & \text{if }\dim U=d-1,
    \end{cases}
  \end{equation*}
  where $s_0$ is the multiplicity of the module $\mathop{\oplus}\limits_{n\geq 0}U/(KQ^{n+1}\cap U)$.
   \item \label{lemma-for-g1-in-unmixed-rings-c} $g_1(Q)\leq g_1(QS)$ with equality if and only if $\dim U\leq d-2$.
   \end{enumerate}
\end{lemma}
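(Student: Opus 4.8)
The plan is to treat the three parts in order, with parts \ref{lemma-for-g1-in-unmixed-rings-b} and \ref{lemma-for-g1-in-unmixed-rings-c} being quick consequences of \ref{lemma-for-g1-in-unmixed-rings-a} together with the ``reduction modulo a submodule of small dimension'' technique already used in the proof of Proposition \ref{prop-lemma for main result}. For \ref{lemma-for-g1-in-unmixed-rings-a}, I would argue that $\dim U < d$ by definition of the unmixed component: every associated prime of $U = U_R(0) = \bigcap_{p \in \assh(R)} Q(p)$ satisfies $\dim R/p < d$, since $\ass(U) \subseteq \ass(R) \setminus \assh(R)$ (the primes $p \in \assh(R)$ are precisely the ones whose primary components are \emph{omitted} from $U$, or rather — more carefully — one checks $\ann(U)$ is not contained in any $p \in \assh(R)$, so $U_p = 0$ for such $p$, forcing every associated prime of $U$ to lie outside $\assh(R)$). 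Hence $\dim U = \max\{\dim R/p : p \in \ass(U)\} \le \max\{\dim R/p : p \in \ass(R) \setminus \assh(R)\} < d$.

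For part \ref{lemma-for-g1-in-unmixed-rings-b}, I would run the same exact sequence argument as in Proposition \ref{prop-lemma for main result}, now with $W$ replaced by $U$. From
$$0 \to U/(KQ^n \cap U) \to R/KQ^n \to S/KQ^nS \to 0$$
we get $\ell(R/KQ^n) = \ell(S/KQ^nS) + \ell(U/(KQ^n \cap U))$ for all $n$, so $P_K(Q,n) = P_{\bar K}(QS,n) + P_U(n)$, where $P_U(n)$ is the Hilbert polynomial of the function $n \mapsto \ell(U/(KQ^{n}\cap U))$, or equivalently (after reindexing) of the graded module $N := \bigoplus_{n\ge 0} U/(KQ^{n+1}\cap U)$. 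By the Artin–Rees lemma this function is eventually polynomial of degree $\dim U$ (the dimension of $N$ as an $F_K(Q)$-module, which equals $\dim U$). If $\dim U \le d-2$, then $\deg P_U \le d-2$, so comparing the coefficients of $\binom{n+d-1}{d}$ and $\binom{n+d-2}{d-1}$ on both sides of the relation between the degree-$d$ polynomials $P_K(Q,n)$ and $P_{\bar K}(QS,n)$ gives $g_0(Q) = g_0(QS)$ and $g_1(Q) = g_1(QS)$. If $\dim U = d-1$, then $P_U(n) = s_0 \binom{n+d-2}{d-1} + (\text{lower order})$ with $s_0$ the multiplicity of $N$; comparing top coefficients still gives $g_0(Q) = g_0(QS)$ (degree-$d$ coefficient of $P_U$ is zero), while the degree-$(d-1)$ coefficient yields $-g_1(Q) = -g_1(QS) + s_0$, i.e. $g_1(Q) = g_1(QS) - s_0$. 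Part \ref{lemma-for-g1-in-unmixed-rings-c} is then immediate: since $s_0 \ge 0$ always (it is a multiplicity of a nonzero module precisely when $\dim U = d-1$, and $s_0 = 0$ is the natural reading when $\dim U \le d-2$), we get $g_1(Q) \le g_1(QS)$, with equality exactly when the correction term vanishes, i.e. when $\dim U \le d-2$ — one should note $s_0 > 0$ strictly when $\dim U = d-1$ because $U \ne 0$ forces $N \ne 0$ and $N$ then has dimension $d-1$, so its leading coefficient is positive.

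The main obstacle I anticipate is the bookkeeping in part \ref{lemma-for-g1-in-unmixed-rings-b}: one must be careful that the ``module'' $\bigoplus_{n \ge 0} U/(KQ^{n+1} \cap U)$ is genuinely a finitely generated graded $F_K(Q)$-module of dimension exactly $\dim U$, so that its Hilbert polynomial has degree $\dim U$ and the reindexing shift ($KQ^n \cap U$ versus $KQ^{n+1}\cap U$) does not corrupt the identification of the degree-$(d-1)$ coefficient with $s_0$ rather than some shifted quantity. The shift only affects lower-order terms when $\dim U = d-1$, so it is harmless for the coefficient $s_0$, but this needs a sentence of justification. Everything else is a direct transcription of the argument in Proposition \ref{prop-lemma for main result} and of the coefficient comparison in Lemma \ref{lemma-gi-for-induction}.
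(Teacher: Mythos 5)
Your overall route is the same as the paper's: part (a) by showing $U_{p'}=0$ for every $p'\in\assh(R)$ (so $\Supp_R(U)\cap\assh(R)=\emptyset$ and $\dim U<d$), part (b) from the exact sequence $0\to U/(KQ^n\cap U)\to R/KQ^n\to S/KQ^nS\to 0$ followed by a comparison of coefficients, and part (c) as an immediate consequence. Parts (a) and (c) are fine as you have them, and you correctly note that the index shift in the definition of $s_0$ is harmless and that $s_0>0$ when $\dim U=d-1$.

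The gap sits exactly at the point you flag as the main obstacle in (b), and the justification you propose does not work. You want $\deg P_U=\dim U$ because $N=\bigoplus_{n\ge 0}U/(KQ^{n+1}\cap U)$ is ``a finitely generated graded $F_K(Q)$-module of dimension $\dim U$.'' But $N$ carries no natural $F_K(Q)$-module structure ($KQ^m\cdot U\not\subseteq KQ^{m+n+1}\cap U$ in general), and although it is a graded module over the Rees algebra $R[Qt]$, it is not finitely generated there: by Artin--Rees, $N/(Qt)N$ equals $U/QU\neq 0$ in all large degrees, so finite generation would force $U=QU$ and hence $U=0$ by Nakayama. Consequently you only have the lower bound on the degree: the containment $KQ^n\cap U\subseteq Q^{n-k}U$ gives $\ell(U/(KQ^n\cap U))\ge\ell(U/Q^{n-k}U)$, hence $\deg P_U\ge\dim U$, but nothing prevents $P_U$ a priori from having degree $d-1$ even when $\dim U\le d-2$, which would perturb $g_1$ and ruin the conclusion. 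The repair is elementary and is what the paper does: bound the function from above by $\ell(U/(KQ^n\cap U))\le\ell(U/KQ^nU)=\ell(U/Q^nU)+\ell(Q^nU/KQ^nU)$, which is eventually a polynomial of degree $\dim U$; together with the Artin--Rees lower bound this pins $\deg P_U=\dim U$, after which your coefficient comparison goes through verbatim.
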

\begin{proof}
\ref{lemma-for-g1-in-unmixed-rings-a} 
Let $(0)=\mathop{\cap}\limits_{p\in\ass(R)}Q(p)$ be a primary decomposition of $(0)$ and 
$U=U_R(0)=\mathop{\cap}\limits_{p\in\assh(R)}Q(p).$ For all $p\in \ass(R)$ and $p^\prime\in \assh(R)$ with $p\neq p'$, we have $Q(p)R_{p'}=R_{p'}.$ Hence for all $p'\in \assh(R)$, we get that $(0)=\mathop{\cap}\limits_{p\in\ass(R)}Q(p)R_{p'}=Q(p')R_{p'}.$ Thus $UR_{p'}=\mathop{\cap}\limits_{p\in\assh(R)}Q(p)R_{p'}=Q(p')R_{p'}=(0).$
Hence $\assh(R)\cap \Supp_R(U)=\phi.$ Therefore $\dim U < \dim R$.

\ref{lemma-for-g1-in-unmixed-rings-b}
Considering the short exact sequence 
$$0\to U/(KQ^n\cap U)\to R/KQ^n\to S/KQ^nS\to 0,$$
we get that
\begin{equation}\label{eqn-lemma-1}
\ell(R/KQ^n)=\ell(S/KQ^nS)+\ell(U/(KQ^n\cap U))    \mbox{ for all } n\in \ints.      
\end{equation}
Hence $\ell(U/(KQ^n\cap U))$ agrees  with a polynomial, say $T(n)$ for $n\gg 0$. We write 
 \begin{equation}\label{eqn-lemma-2}
 T(n)=s_0\binom{n+t}{t}-s_1\binom{n+t-1}{t-1}+\ldots+(-1)^ts_t
 \end{equation}
for some $t\geq 0$ and $s_i \in \mathbb Z$ for $0\leq i\leq t.$ We claim that $t=\dim U.$ 
By the Artin-Rees lemma, there exists an integer $k$ such that for all $n\gg0$,
$KQ^n\cap U\subseteq Q^n\cap U= Q^{n-k}(Q^{k}\cap U)\subseteq Q^{n-k}U.$ Hence $\ell(U/Q^{n-k}U) \leq \ell(U/(KQ^n\cap U))$ for all $n\gg 0$ which implies
$t\geq \dim U$.
On the other hand, Since $\ell(U/KQ^nU)=\ell(U/Q^nU)+\ell(Q^nU/KQ^nU)$, 
we see that $\ell(U/KQ^nU)$ coincides with a polynomial of degree equals $\dim U$ for all $n\gg 0$.  Therefore $\ell(U/(KQ^n\cap U))\leq \ell(U/KQ^nU)$ for all 
$n\in \ints$ implies
that $t=\dim U$. 

From \eqref{eqn-lemma-1}, we get
\begin{equation}\label{eqn-lemma-3}
P_K(Q,n)=P_{KS}(QS,n)+T(n) \mbox{ for all } n \in \mathbb{Z}.
\end{equation}
By comparing the coefficients of both sides of \eqref{eqn-lemma-3} and using \eqref{eqn-lemma-2}, we get the result.

\ref{lemma-for-g1-in-unmixed-rings-c} Follows from \ref{lemma-for-g1-in-unmixed-rings-b}.
\end{proof}

The proof of Theorem \ref{thm-vanishing} is based on the methods employed in \cite{vanishing1}. 
We recall the following results from \cite{tightclosure} which are needed to prove Theorem \ref{thm-vanishing}.  

\begin{lemma}\label{thm-as-the-main-lemma1}{\rm\cite[Lemma 3.1]{tightclosure}}
 Let $(R,\m)$ be a complete local ring. Suppose $\ass(R)\subseteq\assh(R)\cup \{\m\}$. Then $H_\m^1(R)$ has  finite length.
\end{lemma}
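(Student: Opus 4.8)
The statement to prove is Lemma~\ref{thm-as-the-main-lemma1}: if $(R,\m)$ is a complete local ring with $\ass(R)\subseteq\assh(R)\cup\{\m\}$, then $H^1_\m(R)$ has finite length. My plan is to isolate the $\m$-torsion of $R$ and reduce to the case where $R$ has no embedded prime at $\m$, then exploit unmixedness.

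\textbf{Setup and reduction.} First I would set $W=H^0_\m(R)$, the submodule of elements killed by a power of $\m$; this is a finite-length module since $R$ is Noetherian and $W$ is $\m$-primary-supported and finitely generated. Put $\bar R=R/W$. The short exact sequence $0\to W\to R\to\bar R\to 0$ induces the long exact sequence in local cohomology; since $H^i_\m(W)=0$ for $i\geq 1$ (as $W$ has finite length) and $H^0_\m(\bar R)=0$, we get $H^1_\m(R)\cong H^1_\m(\bar R)$ together with an exact piece $0\to W\to R\to \bar R\to H^1_\m(R)\to 0$. So it suffices to prove $H^1_\m(\bar R)$ has finite length. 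The key point is that passing to $\bar R$ removes the possible embedded prime $\m$: one checks $\ass(\bar R)=\ass(R)\setminus\{\m\}\subseteq\assh(R)$, and since $\dim\bar R=\dim R=d$ (killing the finite-length $W$ does not drop dimension, as $d>0$), we in fact have $\ass(\bar R)=\assh(\bar R)$, i.e.\ $\bar R$ is unmixed.

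\textbf{The unmixed case.} Now it remains to show: if $S$ is a complete local ring with $\ass(S)=\assh(S)$ (unmixed) and $\dim S=d>0$, then $H^1_\m(S)$ has finite length; when $d=1$ this would say $H^1_\m(S)=0$, i.e.\ $S$ is Cohen--Macaulay of dimension one, which is exactly the statement that an unmixed one-dimensional local ring has positive depth. The general mechanism I would use is the following standard fact: $H^1_\m(S)$ has finite length if and only if $\depth S_\p\geq 1$ for every nonmaximal prime $\p$, equivalently $S_\p$ has no embedded associated prime along the punctured spectrum — more precisely, $\ell(H^1_\m(S))<\infty$ iff $S$ satisfies Serre's condition $S_1$ on the punctured spectrum, which here follows because every associated prime of $S$ is a \emph{minimal} prime (unmixedness forces $\ass(S)=\mathrm{Min}(S)$, as associated primes of maximal dimension in a ring all of whose primes have dimension $d$ are minimal). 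So $S$ has no embedded primes at all, hence $\depth S_\p\geq 1$ for all $\p\neq\m$ with $\dim S_\p\geq 1$, which gives the finite-length conclusion via the standard characterization of modules whose local cohomology in degree $\leq 1$ is finite length (e.g.\ via the exact sequence relating $H^0$ and $H^1$ to the deficiency module, or by a Grothendieck spectral sequence / dévissage argument on $\mathrm{Spec}(S)\setminus\{\m\}$).

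\textbf{Main obstacle.} The delicate part is the passage from ``$S$ has no embedded primes'' to ``$H^1_\m(S)$ has finite length'': one needs the fact that $\ell(H^i_\m(S))<\infty$ for all $i\leq t$ is equivalent to $S_\p$ being Cohen--Macaulay (or just $\depth S_\p\geq t$) for all primes $\p$ of dimension $\geq$ something, localized statement. For $i\leq 1$ the cleanest route is: $H^1_\m(S)$ is the cokernel of $S\to\Gamma(\mathrm{Spec}(S)\setminus\{\m\},\mathcal O)$ up to the finite-length term $H^0_\m(S)$, and this sheaf cohomology term is finitely generated precisely when $\depth S_\p\geq 2$ for all $\p$ with $\dim S/\p\geq 2$ and $\depth$ at height-one primes is $\geq1$; unmixedness (no embedded primes) supplies the depth-one condition at height-one primes, and that is enough to make the $H^1$ finite length. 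I would cite this as a known homological fact; alternatively, if $d=1$ one argues directly that a nonzerodivisor exists because $\m\notin\ass(S)$, giving $\depth S\geq 1=\dim S$, so $H^1_\m(S)=0$, and for $d\geq 2$ one reduces to the one-dimensional situation by localizing at the (finitely many) height-one primes and using that $\ell(H^1_\m(S))<\infty \iff (H^1_\m(S))_\p=H^1_{\p S_\p}(S_\p)=0$ for all $\p\neq\m$, which holds since each such $S_\p$ is unmixed of dimension $\leq d-1$ and, by descending induction / the one-dimensional base case, Cohen--Macaulay in depth one.
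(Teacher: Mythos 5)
The paper does not actually prove this lemma; it is quoted from Goto--Nakamura \cite[Lemma 3.1]{tightclosure}, so there is no internal argument to compare against. Judged on its own, your proposal has the right overall shape --- kill $H^0_\m(R)$ to reduce to the unmixed case, then control $H^1_\m$ by depth conditions on the punctured spectrum --- but three of the claims carrying the argument are wrong. First, the $d=1$ ``base case'' fails: for a one-dimensional Cohen--Macaulay local ring $S$, $H^1_\m(S)$ is the \emph{top} local cohomology module, which is nonzero and never finitely generated; what $\depth S\geq 1$ gives you is $H^0_\m(S)=0$, not $H^1_\m(S)=0$. (This is not just a slip: the lemma as literally stated is false when $d=1$, and is only applied in the paper for $d\geq 3$; any correct proof must use $d\geq 2$.) Second, the reduction ``$\ell(H^1_\m(S))<\infty \iff (H^1_\m(S))_{\mathfrak{p}}=H^1_{\mathfrak{p} S_{\mathfrak{p}}}(S_{\mathfrak{p}})=0$ for all $\mathfrak{p}\neq\m$'' rests on a false identity: $H^1_\m(S)$ is $\m$-torsion, so its localization at any $\mathfrak{p}\neq\m$ vanishes for trivial reasons and tells you nothing, and local cohomology with support in $\m$ does not localize to local cohomology with support in $\mathfrak{p}S_{\mathfrak{p}}$. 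Third, the criterion ``finite length iff $S_1$ on the punctured spectrum'' is not the correct one.

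The fact you actually need is the Grothendieck finiteness theorem, equivalently local duality over the complete ring: writing $R=A/I$ with $A$ regular complete of dimension $n$, one has $H^1_\m(R)^\vee\cong\mathrm{Ext}^{n-1}_A(R,A)$, a finitely generated module, which has finite length iff it vanishes after localizing at every nonmaximal prime. A projective-dimension count (Auslander--Buchsbaum) shows $\mathrm{Ext}^{n-1}_A(R,A)_{\mathfrak{p}}=0$ automatically when $\dim R/\mathfrak{p}\geq 2$, and, when $\dim R/\mathfrak{p}=1$, precisely when $\depth R_{\mathfrak{p}}\geq 1$. Since $d\geq 2$, a prime with $\dim R/\mathfrak{p}=1$ lies in neither $\assh(R)$ nor $\{\m\}$, hence by hypothesis is not associated to $R$, which gives $\depth R_{\mathfrak{p}}\geq 1$. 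With this substitution your outline closes up (and the preliminary reduction modulo $H^0_\m(R)$ becomes unnecessary); as written, the key finiteness step is asserted rather than proved, and the two supporting claims above would not survive scrutiny.
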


Using Lemma \ref{lemma-existence-sup-elt-avoiding-ideals}, the proof of \cite[Proposition 3.3]{tightclosure} shows that $x_1,\ldots,x_d$ in \cite[Proposition 3.3]{tightclosure} can be choosen such that $x_1^o,\ldots,x_d^o$ is $F_K(Q)$-superficial and $x_1^*,\ldots,x_d^*$ is $G(Q)$-superficial. 

\begin{proposition}\label{thm-as-the-main-lemma2}{\rm\cite[Proposition 3.3]{tightclosure}}
 Let $(R,\m)$ be a homomorphic image of a Cohen-Macaulay local ring of dimension $d$ and assume
 that $\ass(R)\subseteq \assh(R)\cup \{\m\}$. Let 
 $Q$ be a parameter ideal. Then there exists a system of generators $x_1,\ldots,x_d$ of
 $Q$ such that $x_1^o,\ldots,x_d^o$ is $F_K(Q)$-superficial, $x_1^*,\ldots,x_d^*$ is $G(Q)$-superficial and   $\ass(R/Q_i)\subseteq \assh(R/Q_i)\cup \{\m\},$
where $Q_i=(x_1,\ldots,x_i)$ for $0\leq i\leq d.$
\end{proposition}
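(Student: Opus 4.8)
The plan is to follow the proof of \cite[Proposition 3.3]{tightclosure}, changing only how the parameters are extracted: wherever that argument uses the classical existence of a $G(Q)$-superficial element avoiding a finite set of ideals, I would instead invoke Lemma \ref{lemma-existence-sup-elt-avoiding-ideals}, which produces an element of $Q\setminus\m Q$ that is simultaneously $G(Q)$-superficial and $F_K(Q)$-superficial and lies outside any prescribed finite list of ideals not containing $Q$. Since $Q$ is $\m$-primary, $V(Q)=\{\m\}$, so $Q\not\subseteq p$ for every non-maximal prime $p$; thus Lemma \ref{lemma-existence-sup-elt-avoiding-ideals} can always be used to avoid a finite collection of non-maximal primes. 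This is the sole deviation from \cite{tightclosure}, and it does not interfere with the rest of the argument, which I summarize below.

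The proof goes by induction on $d$; the case $d=0$ is vacuous. Assume $d\geq1$. Since $d\geq1$ and $\ass(R)\subseteq\assh(R)\cup\{\m\}$, every minimal prime of $R$ lies in $\assh(R)$, so $R$ is equidimensional; and since $R$ is a homomorphic image of a Cohen-Macaulay ring, $R$ is universally catenary and its non-Cohen-Macaulay locus is closed. I would choose a suitable finite set $\mathcal P$ of non-maximal primes — containing $\ass(R)\setminus\{\m\}$ together with enough further primes detecting the locus where $R$ fails to be Cohen-Macaulay — and use Lemma \ref{lemma-existence-sup-elt-avoiding-ideals} to pick $x_1\in Q\setminus\m Q$ avoiding $\mathcal P$, with $x_1^*$ being $G(Q)$-superficial and $x_1^o$ being $F_K(Q)$-superficial. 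Because $x_1$ avoids all minimal primes of $R$ and lies outside $\m Q$, one gets $\dim R/x_1R=d-1$, $x_1$ is part of a minimal generating set of $Q$, and $QR/x_1R$ is a parameter ideal of $R/x_1R$.

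The crux — which I expect to be the main obstacle — is that $\mathcal P$ can be (and must be) chosen so that $\ass(R/x_1R)\subseteq\assh(R/x_1R)\cup\{\m\}$. The point is to force $R_q$ to be Cohen-Macaulay for every prime $q\neq\m$ containing $x_1$; avoiding only $\ass(R)\setminus\{\m\}$ does not suffice (on the punctured spectrum $R$ may fail Serre's $(S_2)$, as for the ``two planes'' type ring, and then $R/x_1R$ can acquire an embedded prime of positive dimension), so one genuinely has to avoid the relevant part of the non-Cohen-Macaulay locus, and this is exactly where being a homomorphic image of a Cohen-Macaulay ring is used essentially. Granting this, if $q\in\ass(R/x_1R)$ and $q\neq\m$, then localizing at $q$ one has $qR_q\in\ass(R_q/x_1R_q)$; moreover $\ass(R_q)$ consists of the localizations of the minimal primes of $R$ contained in $q$ (the only embedded prime of $R$ being $\m\not\subseteq q$), all of which $x_1$ avoids, so $x_1$ is a nonzerodivisor on the Cohen-Macaulay ring $R_q$, whence $R_q/x_1R_q$ is Cohen-Macaulay and $qR_q\in\ass(R_q/x_1R_q)$ forces $\dim R_q/x_1R_q=0$, i.e. $\dim R_q=1$; by equidimensionality and catenarity this yields $\dim R/q=d-1$, i.e. $q\in\assh(R/x_1R)$. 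Making the choice of $\mathcal P$ precise and this punctured-spectrum argument watertight is the content of \cite[Proposition 3.3]{tightclosure}.

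To conclude, $R/x_1R$ is again a homomorphic image of a Cohen-Macaulay local ring, has dimension $d-1$, and by the crux step satisfies $\ass\subseteq\assh\cup\{\m\}$; so the induction hypothesis supplies a minimal system of generators $\bar x_2,\dots,\bar x_d$ of $QR/x_1R$ whose initial forms form both a $G(QR/x_1R)$-superficial and an $F_{KR/x_1R}(QR/x_1R)$-superficial sequence, with $\ass\bigl((R/x_1R)/(\bar x_2,\dots,\bar x_i)\bigr)\subseteq\assh(\cdot)\cup\{\m\}$ for every $i$. Lifting $\bar x_2,\dots,\bar x_d$ to $x_2,\dots,x_d\in Q$: the tuple $x_1,x_2,\dots,x_d$ contains $x_1$ and generates $Q$ modulo $x_1$, hence generates $Q$ and, having exactly $d$ members, is a minimal system of generators of $Q$. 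Via the identification $R/Q_i=(R/x_1R)/(\bar x_2,\dots,\bar x_i)$ and the recursive definition of a superficial sequence, $x_1^*,\dots,x_d^*$ is $G(Q)$-superficial and $x_1^o,\dots,x_d^o$ is $F_K(Q)$-superficial; and $\ass(R/Q_i)\subseteq\assh(R/Q_i)\cup\{\m\}$ holds for $i=0$ by hypothesis, for $i=1$ by the crux step, and for $i\geq2$ by the induction hypothesis. This completes the plan.
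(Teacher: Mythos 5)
Your reduction is exactly the paper's: the paper offers no proof beyond the remark that, by Lemma~\ref{lemma-existence-sup-elt-avoiding-ideals}, the parameters constructed in the proof of \cite[Proposition 3.3]{tightclosure} can in addition be taken so that $x_1^o,\ldots,x_d^o$ is $F_K(Q)$-superficial and $x_1^*,\ldots,x_d^*$ is $G(Q)$-superficial, and your proposal makes precisely this substitution, correctly noting that $Q\not\subseteq p$ for every non-maximal prime $p$, so the avoidance lemma always applies. One correction to your gloss of the cited proof, however: it is not possible to ``force $R_q$ to be Cohen-Macaulay for every non-maximal prime $q$ containing $x_1$'' by avoiding a finite set of primes. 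If the non-Cohen-Macaulay locus of $R$ has dimension at least $2$ --- which is compatible with all the hypotheses, e.g.\ $R=S[[t_1,t_2]]$ with $S$ a two-dimensional non-Cohen-Macaulay complete local domain is an unmixed homomorphic image of a regular local ring --- then \emph{every} parameter $x_1$ lies in some non-maximal prime $q$ with $R_q$ not Cohen-Macaulay. What the argument of \cite{tightclosure} actually requires is weaker: $x_1$ need only avoid the non-maximal primes $q$ with $\depth R_q\leq 1$ and $\dim R_q\geq 2$, and the hypothesis $\ass(R)\subseteq\assh(R)\cup\{\m\}$ is exactly what guarantees (via local duality over a Gorenstein cover, by bounding the dimensions of the deficiency modules of $R$) that this set of primes is finite; clearing the whole non-Cohen-Macaulay locus is neither possible nor needed. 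Since you ultimately defer the precise choice of $\mathcal P$ to \cite[Proposition 3.3]{tightclosure} --- as the paper itself does --- your reduction stands, but the heuristic you offer for the crux step is one that would not survive being made watertight.
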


We now prove the main theorem of this section.

\begin{theorem}\label{thm-vanishing}
  Let $(R,\m)$ be a Noetherian local ring of dimension $d >0$ and $Q$ a parameter ideal of $R$. Let $K$ be an ideal such that $Q\subseteq K$. 
  Then the following statements are equivalent:
  \begin{enumerate}[label=(\alph*)]
   \item \label{thm-vanishing-statement-a} $R$ is Cohen-Macaulay;
   \item \label{thm-vanishing-statement-b} $R$ is unmixed and $g_0(Q)+g_1(Q)= -\ell(R/K)+\ell(R/Q);$
   \item \label{thm-vanishing-statement-c} $R$ is unmixed and $g_0(Q)+g_1(Q)\geq -\ell(R/K)+\ell(R/Q);$
   \item \label{thm-vanishing-statement-d} $R$ is unmixed and $f_0(Q) \leq \ell(R/K)+e_1(Q)+e_0(Q)-\ell(R/Q);$
   \item \label{thm-vanishing-statement-e} $R$ is unmixed and $f_0(Q) = \ell(R/K)+e_1(Q)+e_0(Q)-\ell(R/Q).$
  \end{enumerate}
 \end{theorem}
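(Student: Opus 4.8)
The plan is to prove the cycle of implications $\ref{thm-vanishing-statement-a}\Rightarrow\ref{thm-vanishing-statement-b}\Rightarrow\ref{thm-vanishing-statement-c}\Rightarrow\ref{thm-vanishing-statement-a}$, and then handle $\ref{thm-vanishing-statement-d}$ and $\ref{thm-vanishing-statement-e}$ by a purely formal translation. For $\ref{thm-vanishing-statement-a}\Rightarrow\ref{thm-vanishing-statement-b}$: a Cohen-Macaulay ring is unmixed, and by Remark's equation \eqref{equation relating g0 and f0} we have $g_0(Q)=e_0(Q)=\ell(R/Q)$ (the last equality because $Q$ is a parameter ideal in a Cohen-Macaulay ring), while Proposition \ref{prop-g1Q-in-CMring} gives $g_1(Q)=-\ell(R/K)$; adding these yields exactly \ref{thm-vanishing-statement-b}. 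The implication $\ref{thm-vanishing-statement-b}\Rightarrow\ref{thm-vanishing-statement-c}$ is trivial. For the equivalences with $\ref{thm-vanishing-statement-d}$ and $\ref{thm-vanishing-statement-e}$: from \eqref{equation relating g0 and f0} and \eqref{eqn-fi-gi-ei} with $i=0$ we get $f_0(Q)=e_1(Q)-g_1(Q)+e_0(Q)-g_0(Q)$, and since $g_0(Q)=e_0(Q)$ this reads $f_0(Q)=e_1(Q)-g_1(Q)+e_0(Q)-e_0(Q)$... more precisely $g_0(Q)+g_1(Q)=e_0(Q)+e_1(Q)-f_0(Q)$, so the inequality $g_0(Q)+g_1(Q)\geq -\ell(R/K)+\ell(R/Q)$ is equivalent to $f_0(Q)\leq \ell(R/K)+e_1(Q)+e_0(Q)-\ell(R/Q)$, matching \ref{thm-vanishing-statement-d}; similarly for the equalities \ref{thm-vanishing-statement-b}$\Leftrightarrow$\ref{thm-vanishing-statement-e}. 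So the whole theorem reduces to the single hard implication $\ref{thm-vanishing-statement-c}\Rightarrow\ref{thm-vanishing-statement-a}$.

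For $\ref{thm-vanishing-statement-c}\Rightarrow\ref{thm-vanishing-statement-a}$, I would first reduce to the complete case, since $R$ is Cohen-Macaulay iff $\hat R$ is, the hypothesis $R$ unmixed is by definition a statement about $\hat R$, and the coefficients $g_i$, $\ell(R/K)$, $\ell(R/Q)$ are all unchanged under completion. So assume $R$ is complete and unmixed; then $R$ is a homomorphic image of a Cohen-Macaulay ring (Cohen's structure theorem), and $\ass(R)=\assh(R)$, in particular $\ass(R)\subseteq\assh(R)\cup\{\m\}$. Now induct on $d=\dim R$. The base case $d=1$ is exactly Proposition \ref{prop-lemma for main result}. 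For the inductive step, I would use Proposition \ref{thm-as-the-main-lemma2} to pick a system of generators $x_1,\ldots,x_d$ of $Q$ that is simultaneously $G(Q)$-superficial and $F_K(Q)$-superficial and such that each $R/Q_i$ still satisfies $\ass\subseteq\assh\cup\{\m\}$. Set $x=x_1$, $\bar R=R/(x)$, $\bar Q=Q\bar R$, $\bar K = K\bar R$. By Lemma \ref{thm-as-the-main-lemma1}, $H_\m^1(R)$ has finite length, hence $\ell(0:x)\leq\ell(H_\m^1(R))<\infty$; in fact the standard argument (the snake lemma applied to multiplication by $x$ on $0\to H_\m^0(R)\to R\to\cdots$ together with $H_\m^0(R)=0$ since $R$ is unmixed of positive dimension) gives $0:x=0$, i.e. $x$ is a nonzerodivisor. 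Then Lemma \ref{lemma-gi-for-induction} gives $g_0(\bar Q)=g_0(Q)$ and $g_1(\bar Q)=g_1(Q)$ (using $d\geq 2$, so $1\leq d-2$ unless $d=2$; for $d=2$ the correction term is $(-1)^{d-1}\ell(0:x)=0$ anyway), and also $\ell(\bar R/\bar Q)=\ell(R/Q)-\ell(R/(Q:x))$; since $x$ is a nonzerodivisor and $x\in Q$, one checks $Q:x=Q$... no — rather one uses $\ell(R/Q)=\ell(R/(x))-\ell((x)/Q)$ type bookkeeping; the clean statement I want is $\ell(\bar R/\bar Q)=\ell(R/Q)$ when $x$ is a nonzerodivisor in $Q$, which holds because $(x)$ is generated by a nonzerodivisor so $R/(x)$ loses the "$\ell(R/(x))$" that $R/Q$ never counted — I will verify $\ell(R/(Q,x))=\ell(R/Q)$ directly as $x\in Q$. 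Likewise $\ell(\bar R/\bar K)=\ell(R/K)$. Hence $\bar R$ is complete, unmixed (its completion is $\bar R$ itself with $\ass\subseteq\assh$, and $H_\m^0(\bar R)$ finite length forces — wait, I need $\bar R$ genuinely unmixed, which requires ruling out $\m\in\ass(\bar R)$); this is the delicate point and is exactly why Proposition \ref{thm-as-the-main-lemma2} was invoked, guaranteeing only $\ass(\bar R)\subseteq\assh(\bar R)\cup\{\m\}$, so $\bar R$ need not be unmixed, and I must instead carry the weaker hypothesis through the induction.

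This forces a restructuring: I would prove by induction on $d$ the statement "if $R$ is complete, $\ass(R)\subseteq\assh(R)\cup\{\m\}$, and $g_0(Q)+g_1(Q)\geq -\ell(R/K)+\ell(R/Q)$, then $R$ is Cohen-Macaulay," which also covers the original claim once we note an unmixed $R$ has $\ass(R)=\assh(R)$. The base case $d=1$ needs the version of Proposition \ref{prop-lemma for main result} applicable when $\ass(R)\subseteq\assh(R)\cup\{\m\}$; there $W=H_\m^0(R)$ is the nilradical-type piece, $\bar R=R/W$ is one-dimensional Cohen-Macaulay, and the argument of Proposition \ref{prop-lemma for main result} goes through verbatim, concluding $W=0$, hence $R$ Cohen-Macaulay (and in particular genuinely unmixed a posteriori). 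For the inductive step with $d\geq 2$: as above $x$ is a nonzerodivisor (now because $\ass(R)\subseteq\assh(R)\cup\{\m\}$ with $d\geq 2$ forces $\m\notin\ass(R)$, since an associated prime $\m$ would have coheight $0\neq d$), $\bar R$ satisfies $\ass(\bar R)\subseteq\assh(\bar R)\cup\{\m\}$ by the choice of generators, $\dim\bar R=d-1\geq 1$, and $g_0(\bar Q)+g_1(\bar Q)=g_0(Q)+g_1(Q)\geq -\ell(R/K)+\ell(R/Q)=-\ell(\bar R/\bar K)+\ell(\bar R/\bar Q)$, so by induction $\bar R$ is Cohen-Macaulay; then $x$ a nonzerodivisor plus $\bar R=R/(x)$ Cohen-Macaulay gives $R$ Cohen-Macaulay. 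The main obstacle, as signalled, is bookkeeping the passage from "unmixed" to the weaker "$\ass\subseteq\assh\cup\{\m\}$" that survives modding out a superficial element — together with the two length identities $\ell(\bar R/\bar Q)=\ell(R/Q)$ and $\ell(\bar R/\bar K)=\ell(R/K)$ (immediate since $x\in Q\subseteq K$) and the verification that the superficial $x$ really is a nonzerodivisor in the positive-dimensional $\ass\subseteq\assh\cup\{\m\}$ setting, which uses $d\geq 2$ in the inductive step and the finite-length conclusion of Lemma \ref{thm-as-the-main-lemma1} to get started.
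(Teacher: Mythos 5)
Your reduction of the theorem to the single implication \ref{thm-vanishing-statement-c} $\Rightarrow$ \ref{thm-vanishing-statement-a}, the passage to the complete case, the base case $d=1$ via Proposition \ref{prop-lemma for main result}, and the case $d=2$ (where unmixedness of $R$ itself supplies a nonzerodivisor superficial element, and Proposition \ref{prop-lemma for main result} requires no unmixedness of $R/(x_1)$) all agree with the paper. You also correctly locate the obstruction for $d\geq 3$: Proposition \ref{thm-as-the-main-lemma2} only guarantees $\ass(S)\subseteq\assh(S)\cup\{\m\}$ for $S=R/(x_1)$, not unmixedness. But your proposed repair --- running the induction over the class of rings satisfying $\ass(R)\subseteq\assh(R)\cup\{\m\}$ --- does not work, for two reasons. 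First, your justification that this condition together with $d\geq 2$ forces $\m\notin\ass(R)$ is wrong: the condition is designed precisely to \emph{allow} $\m$ as an embedded associated prime (it is $\assh(R)$, not $\ass(R)$, that excludes $\m$), so the chosen superficial $x$ need not be a nonzerodivisor, and both the descent of the inequality (the correction $g_1(\bar Q)=g_1(Q)-\ell(0:x)$ when $d=2$ goes the wrong way) and the final step ``$R/(x)$ Cohen--Macaulay and $x$ a nonzerodivisor $\Rightarrow R$ Cohen--Macaulay'' collapse. Second, and fatally, the strengthened induction statement is false. Take $R=k[[X,Y,Z]]/Z(X,Y,Z)$ and $Q=K=(x,y)$. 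Then $R$ is complete of dimension $2$ with $\ass(R)=\{(z),\m\}\subseteq\assh(R)\cup\{\m\}$, and $\ell(R/Q^{n+1})=\binom{n+2}{2}+1$, so $e_1(Q)=0$ and, by \eqref{eqn-g1-g0-e1}, $g_0(Q)+g_1(Q)=e_1(Q)=0=-\ell(R/K)+\ell(R/Q)$; yet $z\m=0$ gives $\depth R=0$. So no induction can carry ``$\ass\subseteq\assh\cup\{\m\}$'' in place of unmixedness.

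The paper's route for $d\geq 3$ avoids this by passing from $S=R/(x_1)$ one step further to $\bar S=S/U_S(0)$, which \emph{is} unmixed. The condition $\ass(S)\subseteq\assh(S)\cup\{\m\}$ forces $\dim U_S(0)=0$, so Lemma \ref{lemma-for-g1-in-unmixed-rings}\ref{lemma-for-g1-in-unmixed-rings-b} gives $g_i(Q\bar S)=g_i(QS)$ for $i=0,1$ and the inequality descends to $\bar S$ (using $\ell(KS/QS)\geq\ell((KS+U_S(0))/(QS+U_S(0)))$); induction then makes $\bar S$ Cohen--Macaulay. One climbs back to $R$ not by the unavailable ``$S$ CM $\Rightarrow R$ CM'' (indeed $S$ is not known to be CM at this stage), but by local cohomology: the sequences $0\to U_S(0)\to S\to\bar S\to 0$ and $0\to R\xrightarrow{x_1}R\to S\to 0$ yield $H_\m^i(S)=0$ for $1\leq i\leq d-2$, surjectivity of $x_1$ on $H_\m^1(R)$ (killed by Nakayama via the finiteness supplied by Lemma \ref{thm-as-the-main-lemma1}), and injectivity of $x_1$ on the $\m$-torsion modules $H_\m^i(R)$ for $2\leq i\leq d-1$, forcing all of these to vanish. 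This lifting argument is the essential content of the implication and is absent from your proposal.
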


\begin{proof}
Using \eqref{equation relating g0 and f0} and \eqref{eqn-fi-gi-ei}, we get that \ref{thm-vanishing-statement-e} $\Leftrightarrow$ \ref{thm-vanishing-statement-b} and
\ref{thm-vanishing-statement-d} $\Leftrightarrow$ \ref{thm-vanishing-statement-c}. 
Hence it suffices to prove \ref{thm-vanishing-statement-a} $\Rightarrow$ \ref{thm-vanishing-statement-b} 
$\Rightarrow$ \ref{thm-vanishing-statement-c} $\Rightarrow$ \ref{thm-vanishing-statement-a}. \\
 \ref{thm-vanishing-statement-a} $\Rightarrow$ \ref{thm-vanishing-statement-b} Follows from Proposition \ref{prop-g1Q-in-CMring} and the fact that $g_0(Q)=e_0(Q)=\ell(R/Q)$.\\
 \ref{thm-vanishing-statement-b} $\Rightarrow$ \ref{thm-vanishing-statement-c} Clear. \\
 \ref{thm-vanishing-statement-c} $\Rightarrow$ \ref{thm-vanishing-statement-a} We prove by induction on $d.$ The result is clear for $d=1.$ Let $d \geq 2.$
 
 We may assume that $R$ is complete with infinite residue field.
 Suppose $d=2$. Then we may assume that $Q=(x_1,x_2)$ such that 
 $x_1^o$ is $F_K(Q)$-superficial. Since $R$ is unmixed, we can choose 
 $x_1$ to be a nonzerodivisor on $R$. Let $S=R/(x_1).$ Then $Q/(x_1)$ is a parameter ideal of $S.$ By Lemma \ref{lemma-gi-for-induction}, $g_0(QS)=g_0(Q)$ and 
 $g_1(QS)=g_1(Q).$ Hence 
 \begin{equation*}
  g_0(QS)+g_1(QS)\geq -\ell(R/K)+\ell(R/Q)= -\ell(S/KS)+\ell(S/QS).
 \end{equation*}
Therefore, by Proposition \ref{prop-lemma for main result}, $S$ is Cohen-Macaulay which implies that $R$ is Cohen-Macaulay.

Let $d\geq 3.$ By Theorem \ref{thm-as-the-main-lemma2}, there exists a system of generators $x_1,\ldots,x_d$ of $Q$ such that $x_1^o$ is $F_K(Q)$-superficial and 
$\ass(R/(x_1))\subseteq\assh(R/(x_1))\cup \{\m\}$. Let $S=R/(x_1)$ and $\bar{S}=S/U_S(0)$. Then $\bar{S}$ is an unmixed local ring of
dimension $d-1$ and $Q\bar{S}$ is a parameter ideal contained in $K\bar{S}$. Since $\dim U_S(0)=0$, by Lemma \ref{lemma-for-g1-in-unmixed-rings}\ref{lemma-for-g1-in-unmixed-rings-b}, we get that $g_i(Q\bar{S})=g_i(QS)$ for $i=0,1.$ Therefore
\begin{eqnarray*}
g_0(Q\bar{S})+g_1(Q\bar{S})&=&g_0(Q)+g_1(Q) \\
&\geq&  -\ell(R/K)+\ell(R/Q)\\
&=&-\ell(S/KS)+\ell(S/QS)\\
&\geq& \ell(KS/((QS+U_S(0))\cap KS))=\ell((KS+U_S(0))/(QS+U_S(0)))\\
&=& -\ell(S/(KS+U_S(0)))+\ell(S/(QS+U_S(0)))\\
&=&-\ell(\bar{S}/K\bar{S})+\ell(\bar{S}/Q\bar{S}).
\end{eqnarray*}
Hence by induction hypothesis, $\bar{S}$ is Cohen-Macaulay. Therefore $H_\m^i(\bar{S})=0$ for $0\leq i\leq d-2$. The exact sequence
 $$ 0\to U_S(0)\to S\to \bar{S}\to 0$$ 
 gives the following long exact sequence 
 $$ \cdots \to H_\m^i(U_S(0))\to H_\m^i(S)\to H_\m^i(\bar{S})\to \cdots\ .$$
 Since $U_S(0)$ is Artinian, $H_\m^0(U_S(0))=U_S(0)$ and $H_\m^i(U_S(0))=0$ for all $i\geq 1.$ Therefore 
 $H_\m^0(S)=U_S(0)$ and $H_\m^i(S)=0$ for $1\leq i\leq d-2.$

Now considering the exact sequence 
$$0\to R\xrightarrow{x_1} R \to S \to 0,$$ 
we get the long exact sequence 
$$\cdots \to H_\m^{i-1}(S)\to  H_\m^{i}(R)\xrightarrow{x_1} H_\m^{i}(R) \to H_\m^{i}(S)  \to \cdots .$$

This implies that the map $H_\m^1(R)\xrightarrow{x_1} H_\m^1(R)$ is surjective and $ H_\m^{i}(R)\xrightarrow{x_1} H_\m^{i}(R)$ is injective, for $2\leq i\leq d-1.$ 
Thus $H_\m^1(R)=x_1 H_\m^1(R).$ Since $H_\m^1(R)$ is finitely generated by Theorem \ref{thm-as-the-main-lemma1}, using Nakayama's Lemma, we get that $H_\m^1(R)=0.$ 
Since $H_\m^{i}(R)$ is $\m$-torsion, the injectivity of the map $ H_\m^{i}(R)\xrightarrow{x_1} H_\m^{i}(R)$ gives that $H_\m^{i}(R)=0$ for $2\leq i\leq d-1$. Therefore $R$ is Cohen-Macaulay.
\end{proof} 

As a consequence we recover the result of Ghezzi et al. \cite[Theorem 2.1]{vanishing1}.

\begin{corollary}
 Let $R$ be a Noetherian local ring of dimension $d>0$ and $Q$ a parameter ideal of $R$. Then the following statements are equivalent:
 \begin{enumerate}[label=(\alph*)]
  \item \label{corr-vanishing-statement-a} $R$ is Cohen-Macaulay;
  \item \label{corr-vanishing-statement-b} R is unmixed and $e_1(Q)=0$;
  \item \label{corr-vanishing-statement-c} $R$ is unmixed and $e_1(Q)\geq 0$.
 \end{enumerate}
\end{corollary}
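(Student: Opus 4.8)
The plan is to deduce this corollary from Theorem \ref{thm-vanishing} by specializing the auxiliary ideal $K$ to $Q$ itself. Since $Q \subseteq Q$, Theorem \ref{thm-vanishing} applies with $K = Q$, and the point is that its five conditions collapse onto the three stated here.

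First I would record the elementary arithmetic. Taking $K = Q$ gives $\ell(R/K) = \ell(R/Q)$, so the quantity $-\ell(R/K) + \ell(R/Q)$ occurring on the right-hand side of parts \ref{thm-vanishing-statement-b} and \ref{thm-vanishing-statement-c} of Theorem \ref{thm-vanishing} is simply $0$. Next I would translate $g_0(Q) + g_1(Q)$ into the classical Hilbert coefficients of $Q$: by \eqref{equation relating g0 and f0} we always have $g_0(Q) = e_0(Q)$, and by \eqref{eqn-g1-g0-e1} with $i = 1$ (which is the identity derived precisely in the case $K = Q$ we are now in) we get $g_1(Q) = e_1(Q) - e_0(Q)$; adding these yields $g_0(Q) + g_1(Q) = e_1(Q)$.

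With these two observations, condition \ref{thm-vanishing-statement-b} of Theorem \ref{thm-vanishing} reads ``$R$ is unmixed and $e_1(Q) = 0$'', which is exactly condition \ref{corr-vanishing-statement-b} of the corollary, and condition \ref{thm-vanishing-statement-c} reads ``$R$ is unmixed and $e_1(Q) \geq 0$'', which is condition \ref{corr-vanishing-statement-c}; condition \ref{thm-vanishing-statement-a} is unchanged. Hence the equivalences \ref{corr-vanishing-statement-a} $\Leftrightarrow$ \ref{corr-vanishing-statement-b} $\Leftrightarrow$ \ref{corr-vanishing-statement-c} of the corollary are a verbatim instance of the corresponding equivalences already proved in Theorem \ref{thm-vanishing}, so nothing further is needed.

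I expect no genuine obstacle: all the mathematical content sits in Theorem \ref{thm-vanishing}, and the corollary is a dictionary translation obtained by putting $K = Q$. The only point requiring a little care is to invoke the correct special cases of \eqref{equation relating g0 and f0} and \eqref{eqn-g1-g0-e1}, bearing in mind that \eqref{eqn-g1-g0-e1} is itself stated under the hypothesis $K = Q$, which is exactly the situation here.
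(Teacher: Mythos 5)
Your proposal is correct and matches the paper's own proof: both set $K=Q$, use \eqref{eqn-g1-g0-e1} to get $g_0(Q)+g_1(Q)=e_1(Q)$, observe that $-\ell(R/K)+\ell(R/Q)=0$, and then read the result off Theorem \ref{thm-vanishing}. No issues.
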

\begin{proof}

Let $K=Q$. From \eqref{eqn-g1-g0-e1}, $g_0(Q)+g_1(Q)= e_1(Q)$. Hence the result follows from Theorem \ref{thm-vanishing}.
\end{proof}

In case of $d=1$, we have $g_1(Q)\leq -\ell(R/K)$ by Proposition \ref{prop-g1-in-d=1}. We obtain the following corollary
in this direction for $d>0$. In particular for $K=Q$, we recover the non-positivity of the Chern number $e_1(Q)$
\cite[Corollary 2.4(a)]{vanishing1} in Corollary \ref{corr-recover-e1}.

\begin{corollary}\label{corr-negativity-result-for-g1}
 Let $(R,\m)$ be a Noetherian local ring of dimension $d > 0$ and $Q$ a parameter ideal of $R$. Let $K$ be an ideal such that $Q\subseteq K$. Then
 \begin{enumerate} 
            \item  \label{corr-negativity-result-for-g1-aa} 
                   \begin{enumerate} [label=(\alph*)]
                        \item \label{corr-negativity-result-for-g1-a} $g_0(Q)+ g_1(Q)\leq -\ell(R/K)+\ell(R/Q)$.
                         \item  $f_0(Q) \geq \ell(R/K)+ e_1(Q)+e_0(Q)-\ell(R/Q)$.
                    \end{enumerate}
            \item  \label{corr-negativity-result-for-g1-bb} Suppose $\depth R=d-1$. Then
                   \begin{enumerate}[label=(\alph*)]
			  \item  \label{corr-negativity-result-for-g1-b}  $g_0(Q) + g_1(Q)< -\ell(R/K)+\ell(R/Q)$.
                          \item $f_0(Q) > \ell(R/K)+ e_1(Q)+e_0(Q)-\ell(R/Q)$.
                         
                     \end{enumerate}
 \end{enumerate}
\end{corollary}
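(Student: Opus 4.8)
The plan is to derive both parts directly from Theorem \ref{thm-vanishing} together with the reduction lemmas already established, following the template used for the Chern-number case in \cite{vanishing1}. For part \ref{corr-negativity-result-for-g1-aa}, statement \ref{corr-negativity-result-for-g1-a}, I would argue by induction on $d$, reducing to the one-dimensional case. First I would pass to the completion and assume the residue field is infinite, so that $F_K(Q)$-superficial and $G(Q)$-superficial elements exist by Lemma \ref{lemma-existence-sup-elt-avoiding-ideals}. When $d=1$, the inequality $g_0(Q)+g_1(Q)\le -\ell(R/K)+\ell(R/Q)$ is exactly the content of the computation in the proof of Proposition \ref{prop-lemma for main result} (the chain of inequalities \eqref{ub on g_0+g_1} applied to $R$ itself when $R$ is Cohen-Macaulay, and in general after modding out $W=H_\m^0(R)$ using \eqref{eqn-R-to-Rbar}, which only shifts $g_1$ downward by $\ell(W)\ge 0$). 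For $d\ge 2$, I would choose $x_1\in Q\setminus\m Q$ with $x_1^*$ $G(Q)$-superficial and $x_1^o$ $F_K(Q)$-superficial, set $S=R/(x_1)$, and apply Lemma \ref{lemma-gi-for-induction}: since $d\ge 2$, we have $g_i(\bar Q)=g_i(Q)$ for $i=0,1$ (the correction term only affects $i=d-1\ge 1$, and when $d=2$ it affects $i=1$ but involves $(-1)^{1}\ell(0:x_1)\le 0$, which again only helps). Also $\ell(S/QS)=\ell(R/Q)$ and $\ell(S/KS)=\ell(R/K)$ need a small check — in fact these lengths are not literally preserved, so the cleaner route is to keep $x_1$ a nonzerodivisor when $Q$ contains one, or simply to observe $\ell(R/(KQ^n,x_1))$ versus $\ell(R/KQ^n)$ directly. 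Statement \ref{corr-negativity-result-for-g1-b}'s inequality (b) in part \ref{corr-negativity-result-for-g1-aa} then follows from (a) by substituting \eqref{equation relating g0 and f0} and \eqref{eqn-fi-gi-ei}, exactly as in the proof that \ref{thm-vanishing-statement-d}$\Leftrightarrow$\ref{thm-vanishing-statement-c} in Theorem \ref{thm-vanishing}.

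For part \ref{corr-negativity-result-for-g1-bb}, assume $\depth R=d-1$; I want the strict inequality $g_0(Q)+g_1(Q)<-\ell(R/K)+\ell(R/Q)$. The key point is that $\depth R=d-1<d$ forces $R$ to be non-Cohen-Macaulay, so by Theorem \ref{thm-vanishing} the equality case is excluded \emph{provided $R$ is unmixed}; combined with the weak inequality from part \ref{corr-negativity-result-for-g1-aa}\ref{corr-negativity-result-for-g1-a} we would get strictness. The subtlety is that $\depth R=d-1$ does not by itself imply $R$ is unmixed, so Theorem \ref{thm-vanishing} is not immediately applicable. I would handle this by reduction: pass to $\hat R$, let $U=U_{\hat R}(0)$ be the unmixed component and $S=\hat R/U$. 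By Lemma \ref{lemma-for-g1-in-unmixed-rings}\ref{lemma-for-g1-in-unmixed-rings-a}, $\dim U<d$, so $\depth S\ge \min\{\depth\hat R,\, \dim U+1\}$ — one must check $\depth S$ stays at $d-1$ and that $S$ is still not Cohen-Macaulay; here the local cohomology long exact sequence of $0\to U\to\hat R\to S\to 0$ together with $H_\m^d(\hat R)\ne 0$ (as $d=\dim$) and finiteness/vanishing of $H_\m^i(U)$ for $i>\dim U$ does the bookkeeping. Once $S$ is unmixed with $\depth S<\dim S$, Theorem \ref{thm-vanishing} gives strict inequality for $S$, and Lemma \ref{lemma-for-g1-in-unmixed-rings}\ref{lemma-for-g1-in-unmixed-rings-c} ($g_1(Q)\le g_1(QS)$, with the length terms matching up) transports it back to $\hat R$ and hence to $R$. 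The strict version of (b) then follows by the same length substitution.

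The main obstacle I anticipate is the bookkeeping in part \ref{corr-negativity-result-for-g1-bb}: ensuring that the depth-$(d-1)$ hypothesis genuinely descends to the unmixed quotient $S$ in a form that keeps $S$ non-Cohen-Macaulay, and tracking how $\ell(R/Q)$, $\ell(R/K)$ compare with $\ell(S/QS)$, $\ell(S/KS)$ so that the strict inequality survives the reduction. All of this is morally identical to the argument for $e_1(Q)$ in \cite[Corollary 2.4]{vanishing1}; the only genuinely new input is keeping careful track of the $\ell(R/K)$ and $\ell(R/Q)$ correction terms that are invisible when $K=Q$, using relations \eqref{equation relating g0 and f0}, \eqref{eqn-fi-gi-ei} to translate freely between the $g_i$-form and the $f_0$-form of each inequality.
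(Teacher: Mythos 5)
There is a genuine gap in your induction step for part (1)(a). For $d=2$ you reduce modulo a superficial element $x_1$ and claim the correction term in Lemma \ref{lemma-gi-for-induction} ``only helps,'' but the sign goes the other way: the lemma gives $g_1(\bar Q)=g_1(Q)-\ell(0:x_1)$, i.e.\ $g_1(Q)=g_1(\bar Q)+\ell(0:x_1)\geq g_1(\bar Q)$, so an upper bound on $g_0(\bar Q)+g_1(\bar Q)$ does \emph{not} bound $g_0(Q)+g_1(Q)$ from above unless $\ell(0:x_1)=0$. You cannot in general take $x_1$ to be a nonzerodivisor, because $Q$ contains one only when $\depth R>0$, and the depth-zero, dimension-two case is exactly where your chain of reductions lands after peeling off superficial elements from higher dimension (the paper's own Example \ref{example-e3} is such a ring). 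One can patch this by first killing $H^0_\m(R)$, but you do not do that; the paper avoids the issue entirely by passing to the unmixed quotient $S=R/U_R(0)$, where Lemma \ref{lemma-for-g1-in-unmixed-rings}\ref{lemma-for-g1-in-unmixed-rings-c} gives the inequality in the \emph{correct} direction, $g_0(Q)+g_1(Q)\leq g_0(QS)+g_1(QS)$, and then Theorem \ref{thm-vanishing} applied to the unmixed ring $S$ (equality if $S$ is Cohen--Macaulay, failure of $\geq$ otherwise) together with $\ell(KS/QS)\leq\ell(K/Q)$ finishes the argument with no induction at all. (Also, your worry that $\ell(S/QS)$ and $\ell(S/KS)$ differ from $\ell(R/Q)$ and $\ell(R/K)$ is unfounded: since $x_1\in Q\subseteq K$, one has $S/QS=R/Q$ and $S/KS=R/K$ on the nose.)

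In part (2) there is a second, smaller gap: you assert that the unmixed quotient $S=\hat R/U$ remains non-Cohen--Macaulay, but this can fail when $\dim U=d-1$, since then $H^{d-1}_\m(U)$ may surject onto $H^{d-1}_\m(\hat R)$ in the long exact sequence; in that case the strict inequality must instead come from the term $s_0\geq 1$ in Lemma \ref{lemma-for-g1-in-unmixed-rings}\ref{lemma-for-g1-in-unmixed-rings-b}, a dichotomy your sketch does not identify. The paper's route for (2) is both simpler and avoids this: since $\depth R=d-1\geq 1$ when $d\geq 2$, one may choose $x_1,\ldots,x_{d-1}$ to be simultaneously a regular sequence and superficial, so Lemma \ref{lemma-gi-for-induction} applies with no correction terms and the problem reduces directly to the one-dimensional depth-zero case, where strictness follows from $g_0(Q)=e_0(Q)<\ell(R/Q)$ and Proposition \ref{prop-g1-in-d=1}. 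Your translations between the $g_i$-form and the $f_0$-form via \eqref{equation relating g0 and f0} and \eqref{eqn-fi-gi-ei} are fine.
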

\begin{proof}
In view of \eqref{equation relating g0 and f0} and \eqref{eqn-fi-gi-ei}, it suffices to prove \ref{corr-negativity-result-for-g1-aa}\ref{corr-negativity-result-for-g1-a} and
\ref{corr-negativity-result-for-g1-bb}\ref{corr-negativity-result-for-g1-b}. \\
\ref{corr-negativity-result-for-g1-aa}\ref{corr-negativity-result-for-g1-a} We may assume that $R$ is complete.
 Let $d=1.$ Since $g_0(Q)=e_0(Q) \leq \ell(R/Q),$ using Proposition \ref{prop-g1-in-d=1}, we get that  $g_0(Q)+ g_1(Q)\leq -\ell(R/K)+\ell(R/Q).$ 
 Suppose $d\geq 2.$ Set $S=R/U_R(0).$ Then 
 $S$ is an unmixed local ring and $QS$ is a parameter ideal of $S.$ 
 Hence
 \begin{eqnarray*}
  g_0(Q)+ g_1(Q)&\leq& g_0(QS)+ g_1(QS) \hspace*{1in}\mbox{ [by Lemma \ref{lemma-for-g1-in-unmixed-rings}\ref{lemma-for-g1-in-unmixed-rings-b}] }\\
  &\leq& -\ell(S/KS)+\ell(S/QS) \hspace{0.7in}\mbox{ [from Theorem \ref{thm-vanishing}] }\\
 & \leq& -\ell(R/K)+\ell(R/Q).
 \end{eqnarray*}
 
\ref{corr-negativity-result-for-g1-bb}\ref{corr-negativity-result-for-g1-b}
Let $d=1$. Then $\depth(R)=0$ implies that $g_0(Q)=e_0(Q)<\ell(R/Q)$. Hence, by Proposition \ref{prop-g1-in-d=1}, $g_0(Q) + g_1(Q)< -\ell(R/K)+\ell(R/Q)$.
Suppose $d\geq 2$. Let $Q=(x_1,\ldots,x_d)$ such that $x_1^o,\ldots,x_{d-1}^o$ is an $F_K(Q)$-superficial sequence. Since $\depth(R)=d-1$, 
we may choose $x_1,\ldots,x_{d-1}$ to be an $R$-regular sequence. Let `` $\bar{}$ '' denote reduction modulo $x_1,\ldots,x_{d-1}$. Then, 
by Lemma \ref{lemma-gi-for-induction},
$g_0(Q) + g_1(Q)=g_0(Q\bar{R}) + g_1(Q\bar{R}) < -\ell(R/K)+\ell(R/Q)$ by induction hypothesis.
 \end{proof}

 \begin{corollary}\label{corr-recover-e1}
 Let $(R,\m)$ be a Noetherian local ring of dimension $d > 0$ and $Q$ a parameter ideal of $R$. Then
 \begin{enumerate}[label=(\alph*)]
                          \item $e_1(Q)\leq 0$
                           \item If $\depth R=d-1$, then $e_1(Q) <0$.
 \end{enumerate}
\end{corollary}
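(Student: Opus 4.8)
The plan is to deduce both statements immediately from Corollary \ref{corr-negativity-result-for-g1} by specializing $K = Q$. The only ingredient needed is the translation between the pair $(g_0(Q), g_1(Q))$ and the Chern number $e_1(Q)$ in this case.

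First I would record that when $K = Q$ we have $\ell(R/K) = \ell(R/Q)$, so the right-hand side $-\ell(R/K) + \ell(R/Q)$ appearing throughout Corollary \ref{corr-negativity-result-for-g1} is simply $0$. Next, by \eqref{eqn-g1-g0-e1} applied with $i = 0$ and $i = 1$ we get $g_0(Q) = e_0(Q)$ and $g_1(Q) = e_1(Q) - e_0(Q)$, hence $g_0(Q) + g_1(Q) = e_1(Q)$. (Alternatively this is the instance $K = Q$ of \eqref{equation relating g0 and f0} together with $f_0(Q) = e_1(Q) - g_1(Q) + e_0(Q) - g_0(Q)$ from \eqref{eqn-fi-gi-ei}, but \eqref{eqn-g1-g0-e1} is the cleanest route.)

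With these two observations, part (a) is Corollary \ref{corr-negativity-result-for-g1}\ref{corr-negativity-result-for-g1-aa}\ref{corr-negativity-result-for-g1-a} read as $e_1(Q) = g_0(Q) + g_1(Q) \leq 0$, and part (b) is Corollary \ref{corr-negativity-result-for-g1}\ref{corr-negativity-result-for-g1-bb}\ref{corr-negativity-result-for-g1-b} read as $e_1(Q) = g_0(Q) + g_1(Q) < 0$ under the hypothesis $\depth R = d - 1$.

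There is essentially no obstacle here: the corollary is a pure specialization of the preceding one, and the "hard part" — the inductive argument reducing to dimension one via $F_K(Q)$-superficial elements and the length computations of Proposition \ref{prop-g1-in-d=1} and Lemma \ref{lemma-for-g1-in-unmixed-rings} — has already been carried out in the proof of Corollary \ref{corr-negativity-result-for-g1}. I would therefore keep the write-up to a single short paragraph, citing \eqref{eqn-g1-g0-e1} and Corollary \ref{corr-negativity-result-for-g1} and noting $\ell(R/K) = \ell(R/Q)$ when $K = Q$.
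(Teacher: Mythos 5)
Your proposal is correct and is essentially identical to the paper's own proof, which likewise sets $K=Q$ in Corollary \ref{corr-negativity-result-for-g1} and uses \eqref{eqn-g1-g0-e1} to identify $g_0(Q)+g_1(Q)$ with $e_1(Q)$ while the right-hand side collapses to $0$. Nothing is missing.
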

\begin{proof}
 It follows from letting $K=Q$ in Corollary \ref{corr-negativity-result-for-g1} and using \eqref{eqn-g1-g0-e1}.
\end{proof}
In the following example, we have the inequality as stated in part \ref{corr-negativity-result-for-g1-aa}\ref{corr-negativity-result-for-g1-a} of 
Corollary \ref{corr-negativity-result-for-g1}. 
\begin{example}\label{example-e3}
 Let $S=k[[X,Y,Z]]]$ be a power series ring over a field $k$ and $I=(YZ,X^2Y,Y^3)$. Consider the ring $R=S/I=k[[x,y,z]]$. Then $\dim R=2$ and $\depth R=0$. Let $Q=(x,z^2)$ and $K=(x,y,z^2)$. Notice that $H_{\m}^0(R)=(Y)/I$, hence $\bar{R}=R/H_{\m}^0(R)=k[[X,Z]]$ which is Cohen-Macaulay.
By \eqref{eqn-R-to-Rbar} and Proposition \ref{prop-g1Q-in-CMring}, we get $g_0(Q)+g_1(Q)=g_0(\bar{Q})+g_1(\bar{Q})=\ell(\bar{R}/Q\bar{R})-\ell(\bar{R}/K\bar{R})=2-2=0$ whereas
$-\ell(R/K)+\ell(R/Q)=-2+4=2$. 
\end{example}

The following two results can be seen as analogs of \cite[Theorem 2.6, Corollary 2.7]{vanishing1}. Corollary \ref{corr-cm-gi} gives a characterization of Cohen-Macaulayness of $R$ in terms of $g_i(Q)$.

\begin{theorem}\label{thm-vanishing-main}
 Let $(R,\m)$ be a Noetherian local ring of dimension $d > 0$ and $Q$ a parameter ideal of $R.$ Let $K$ be an ideal such that $Q\subseteq K.$ 
 Suppose $R$ is a homomorphic image of a Cohen-Macaulay ring. Let $U=U_R(0).$ Then the following are equivalent:
  \begin{enumerate}[label=(\alph*)]
   \item \label{thm-vanishing-main-a} $g_0(Q)+ g_1(Q)= -\ell(R/(K+U))+\ell(R/(Q+U));$
   \item \label{thm-vanishing-main-b} $R/U$ is Cohen-Macaulay and $\dim U\leq d-2$.
  \end{enumerate}
\end{theorem}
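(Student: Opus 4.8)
The plan is to pass to the ring $S = R/U$ and reduce everything, via Lemma~\ref{lemma-for-g1-in-unmixed-rings}, to the vanishing theorem (Theorem~\ref{thm-vanishing}) applied to $S$. Note first that $S = R/U$ is unmixed and is a homomorphic image of a Cohen-Macaulay ring, so Theorem~\ref{thm-vanishing} applies to $S$ with the parameter ideal $QS$ and the ideal $KS$. The relevant numerical datum on the $S$ side is $g_0(QS) + g_1(QS)$ versus $-\ell(S/KS) + \ell(S/QS) = -\ell(R/(K+U)) + \ell(R/(Q+U))$.

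For the implication \ref{thm-vanishing-main-b} $\Rightarrow$ \ref{thm-vanishing-main-a}: assume $S = R/U$ is Cohen-Macaulay and $\dim U \le d-2$. Since $\dim U \le d-2$, Lemma~\ref{lemma-for-g1-in-unmixed-rings}\ref{lemma-for-g1-in-unmixed-rings-b} gives $g_0(Q) = g_0(QS)$ and $g_1(Q) = g_1(QS)$. As $S$ is Cohen-Macaulay, $g_0(QS) = e_0(QS) = \ell(S/QS)$ by \eqref{equation relating g0 and f0}, and $g_1(QS) = -\ell(S/KS)$ by Proposition~\ref{prop-g1Q-in-CMring}. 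Adding, $g_0(Q) + g_1(Q) = \ell(S/QS) - \ell(S/KS) = -\ell(R/(K+U)) + \ell(R/(Q+U))$, which is \ref{thm-vanishing-main-a}. (If $U = 0$ then $R$ itself is Cohen-Macaulay and $\dim U = -\infty \le d-2$ trivially, and the same computation applies directly; one should state this degenerate case explicitly.)

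For the converse \ref{thm-vanishing-main-a} $\Rightarrow$ \ref{thm-vanishing-main-b}: from Lemma~\ref{lemma-for-g1-in-unmixed-rings}\ref{lemma-for-g1-in-unmixed-rings-c} we always have $g_1(Q) \le g_1(QS)$, and $g_0(Q) = g_0(QS)$, so $g_0(Q) + g_1(Q) \le g_0(QS) + g_1(QS)$. On the other hand Corollary~\ref{corr-negativity-result-for-g1}\ref{corr-negativity-result-for-g1-aa}\ref{corr-negativity-result-for-g1-a}, applied to the unmixed ring $S$, gives $g_0(QS) + g_1(QS) \le -\ell(S/KS) + \ell(S/QS) = -\ell(R/(K+U)) + \ell(R/(Q+U))$. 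Combining with the hypothesis \ref{thm-vanishing-main-a}, all these inequalities are forced to be equalities. Equality $g_0(Q) + g_1(Q) = g_0(QS) + g_1(QS)$ together with Lemma~\ref{lemma-for-g1-in-unmixed-rings}\ref{lemma-for-g1-in-unmixed-rings-c} forces $\dim U \le d-2$. And equality $g_0(QS) + g_1(QS) = -\ell(S/KS) + \ell(S/QS)$ is precisely condition \ref{thm-vanishing-statement-b} of Theorem~\ref{thm-vanishing} for $S$ (using that $S$ is unmixed), hence $S = R/U$ is Cohen-Macaulay.

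The main obstacle is bookkeeping rather than conceptual: one must be careful that Lemma~\ref{lemma-for-g1-in-unmixed-rings} is stated under the hypothesis $U \neq 0$, so the case $U = 0$ must be handled separately (it is easy — there $R$ is unmixed and the statement reduces to Theorem~\ref{thm-vanishing} directly), and one must verify that $S = R/U$ genuinely satisfies the hypotheses needed to invoke Theorem~\ref{thm-vanishing} and Corollary~\ref{corr-negativity-result-for-g1} — namely that it is unmixed, which follows because $U = U_R(0)$ is the unmixed component and one may assume $R$ complete (so that $\mathrm{Ass}$ behaves well), the passage to completion being harmless since $U$, the Hilbert coefficients $g_i$, and all the lengths involved are unchanged under $\m$-adic completion.
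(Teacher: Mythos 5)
Your proposal is correct and follows essentially the same route as the paper: both directions reduce to $S=R/U$ via Lemma~\ref{lemma-for-g1-in-unmixed-rings}, use Corollary~\ref{corr-negativity-result-for-g1} and Theorem~\ref{thm-vanishing} for \ref{thm-vanishing-main-a}~$\Rightarrow$~\ref{thm-vanishing-main-b}, and Proposition~\ref{prop-g1Q-in-CMring} for the converse. Your equality-case phrasing of the chain of inequalities is just a repackaging of the paper's argument by contradiction on $\dim U=d-1$.
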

\begin{proof}
 \ref{thm-vanishing-main-a} $\Rightarrow$ \ref{thm-vanishing-main-b} Let $S=R/U.$ Then $S$ is an unmixed local ring. If $U=0$, then $R$ is unmixed.
 Hence $g_0(Q)+ g_1(Q)= -\ell(R/K)+\ell(R/Q)$  
 implies that $R$ is Cohen-Macaulay, by Theorem \ref{thm-vanishing}. Suppose $U\neq 0$. 
 First we show that $\dim U\leq d-2.$ By Lemma  \ref{lemma-for-g1-in-unmixed-rings} \ref{lemma-for-g1-in-unmixed-rings-a}, $\dim U \leq d-1.$ Suppose $\dim U=d-1.$ 
 Then $g_0(Q)=g_0(QS)$ and $g_1(Q)=g_1(QS)-s_0$ for some $s_0 \geq 1,$ by  Lemma  \ref{lemma-for-g1-in-unmixed-rings} \ref{lemma-for-g1-in-unmixed-rings-b}. 
 Therefore
 \begin{eqnarray*}
  g_0(Q)+ g_1(Q)&<& g_0(QS)+g_1(QS)\\
  &\leq& -\ell(S/KS)+\ell(S/QS) \hspace*{0.7in} \mbox{ [by Corollary \ref{corr-negativity-result-for-g1}] }\\
&=& -\ell(R/(K+U))+\ell(R/(Q+U))
\end{eqnarray*}
which is a contradiction. Hence $\dim U\leq d-2.$ By Lemma \ref{lemma-for-g1-in-unmixed-rings} \ref{lemma-for-g1-in-unmixed-rings-b}, $g_1(Q)=g_1(QS).$ 
Hence $g_0(QS)+g_1(QS) = -\ell(S/KS)+\ell(S/QS).$
Therefore, by Theorem \ref{thm-vanishing}, $S=R/U$ is Cohen-Macaulay.

\ref{thm-vanishing-main-b} $\Rightarrow$ \ref{thm-vanishing-main-a} Let $S=R/U.$ Since $\dim U\leq d-2$, by  Lemma \ref{lemma-for-g1-in-unmixed-rings}\ref{lemma-for-g1-in-unmixed-rings-b}, 
$g_1(Q)=g_1(QS)=-\ell(S/KS),$ where the last equality holds by Proposition \ref{prop-g1Q-in-CMring}. Therefore 
\begin{eqnarray*}
 g_0(Q)+ g_1(Q)&=&\ell(S/QS)-\ell(S/KS)\\
 &=&\ell(R/(Q+U))-\ell(R/(K+U)).
\end{eqnarray*}
\end{proof}

\begin{corollary}\label{corr-cm-gi}
 Let $(R,\m)$ be a Noetherian local ring of dimension $ d >0$.  Suppose $R$ is a homomorphic image of a Cohen-Macaulay ring.
 Let $Q$ be a parameter ideal of $R$ and $K$ an ideal such that $Q\subseteq K$. Let $U=U_R(0)$. Suppose 
  \begin{equation*}
   g_i(Q)=(-1)^i(g_0(Q)-\ell(R/(Q+U))+\ell(R/(K+U)))
  \end{equation*}
for $1\leq i\leq d$. Then $R$ is Cohen-Macaulay.
\end{corollary}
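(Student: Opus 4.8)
The plan is to peel off the case $i=1$ of the hypothesis, recognize it as precisely the numerical condition appearing in Theorem~\ref{thm-vanishing-main}, and then use the higher cases $2\le i\le d$ to squeeze the unmixed component $U$ down to zero.

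Set $S=R/U$. First I would record, exactly as in the proof of Lemma~\ref{lemma-for-g1-in-unmixed-rings}, that the short exact sequence
$$0\to U/(KQ^n\cap U)\to R/KQ^n\to S/KQ^nS\to 0$$
produces the identity $P_K(Q,n)=P_{KS}(QS,n)+T(n)$, where $T(n)$ is the polynomial agreeing with $\ell(U/(KQ^n\cap U))$ for $n\gg0$ and of degree $\dim U<d$; comparing leading coefficients gives $g_0(Q)=g_0(QS)$. Next I rewrite the hypothesis for $i=1$ as
$$g_0(Q)+g_1(Q)=-\ell(R/(K+U))+\ell(R/(Q+U)),$$
which is exactly condition \ref{thm-vanishing-main-a} of Theorem~\ref{thm-vanishing-main}. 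Since $R$ is a homomorphic image of a Cohen-Macaulay ring, that theorem yields that $S=R/U$ is Cohen-Macaulay and that $\dim U\le d-2$.

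Now that $S$ is Cohen-Macaulay of dimension $d$, I would use $g_0(Q)=g_0(QS)=e_0(QS)=\ell(S/QS)=\ell(R/(Q+U))$ (the middle equalities by \eqref{equation relating g0 and f0} and the Cohen-Macaulayness of $S$) to see that the constant $g_0(Q)-\ell(R/(Q+U))+\ell(R/(K+U))$ collapses to $\ell(R/(K+U))=\ell(S/KS)$. Hence the hypothesis now reads $g_i(Q)=(-1)^i\ell(S/KS)$ for $1\le i\le d$, while Proposition~\ref{prop-g1Q-in-CMring} applied to the Cohen-Macaulay ring $S$ gives $g_i(QS)=(-1)^i\ell(S/KS)$ over the same range. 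Therefore $g_i(Q)=g_i(QS)$ for all $0\le i\le d$, forcing $P_K(Q,n)=P_{KS}(QS,n)$ and hence $T\equiv0$. Consequently $\ell(U/(KQ^n\cap U))=0$ for $n\gg0$, so $U\subseteq\bigcap_{n}KQ^n\subseteq\bigcap_{n}Q^n=(0)$ by Krull's intersection theorem, i.e. $U=0$ and $R=S$ is Cohen-Macaulay.

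I do not anticipate a real obstacle; the only point that needs care is that the identity $g_0(Q)=\ell(R/(Q+U))$ depends on $S$ being Cohen-Macaulay, so it must be invoked only \emph{after} Theorem~\ref{thm-vanishing-main} has been applied --- exploiting it earlier to simplify the hypothesis would be circular. Everything else is bookkeeping with the polynomials $P_K(Q,n)$ and $P_{KS}(QS,n)$.
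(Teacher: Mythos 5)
Your proof is correct and follows essentially the same route as the paper: apply Theorem \ref{thm-vanishing-main} to the $i=1$ case of the hypothesis to get $S=R/U$ Cohen--Macaulay with $\dim U\le d-2$, identify $g_0(Q)=\ell(R/(Q+U))$, match the remaining $g_i(Q)$ with $g_i(QS)=(-1)^i\ell(S/KS)$ via Proposition \ref{prop-g1Q-in-CMring}, and conclude $\ell(U/(KQ^n\cap U))=0$ for $n\gg0$, hence $U=0$. The only cosmetic difference is that you make the final step ($U\subseteq KQ^n$ for $n\gg0$ forces $U=0$ by Krull intersection) explicit, which the paper leaves implicit.
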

\begin{proof}
Since 
$g_1(Q)=-(g_0(Q)-\ell(R/(Q+U))+\ell(R/(K+U))),$ by Theorem \ref{thm-vanishing-main}, $R/U$ is Cohen-Macaulay and $\dim U\leq d-2$.
Set $S=R/U$. By Lemma \ref{lemma-for-g1-in-unmixed-rings} \ref{lemma-for-g1-in-unmixed-rings-b}, $g_0(Q)=g_0(QS)=\ell(S/QS)=\ell(R/(Q+U)).$ 
From Proposition \ref{prop-g1Q-in-CMring}, we have  
\begin{eqnarray*} 
g_i(QS)=(-1)^i\ell(S/KS)\mbox{ for } 1\leq i\leq d.
\end{eqnarray*}
Hence 
$$g_i(Q)=(-1)^i\ell(R/(K+U))=(-1)^i\ell(S/KS) \mbox{ for } 1\leq i\leq d.$$
Therefore, for $n\gg 0,$ 
 \begin{eqnarray*}
     \ell(S/KQ^nS)&=&\ell(S/QS)\binom{n+d-1}{d}+\ell(S/KS)\binom{n+d-2}{d-1}+\cdots+\ell(S/KS) \mbox{ and }\\
   \ell(R/KQ^n)&=&\ell(S/QS)\binom{n+d-1}{d}+\ell(S/KS)\binom{n+d-2}{d-1}+\cdots+\ell(S/KS).
    \end{eqnarray*}
Thus 
\begin{equation*}
           \ell(U/(KQ^n\cap U))=\ell(R/KQ^n)-\ell(S/KQ^nS)=0 \mbox{~~for~~} n\gg0 .
          \end{equation*}
 This implies that $U=0$ and hence $R$ is Cohen-Macaulay.         
 \end{proof}

\section*{Acknowledgements}
The author is grateful to her advisor Anupam Saikia for the encouragement to pursue this work. She is also
grateful to Krishna Hanumanthu and Shreedevi K. Masuti for insightful discussions. She thanks the Indian Institute of Technology, Guwahati
for granting the Ph.D. scholarship and the Institute of Mathematical Sciences, Chennai for its hospitality where a 
significant part of this work was done.

\nocite{*}
\addcontentsline{toc}{section}{References}


\begin{thebibliography}{1}
\bibitem{bruns-herzog} W. Bruns and J. Herzog, {\em Cohen-Macaulay rings}, Revised Edition, Cambridge University Press, Cambridge, 1998.

\bibitem{Clare} C. D'Cruz, {\em On the homology and fiber cone of ideals}, Comm. Algebra {\bf 41} (2013), 4227-4247.

\bibitem{vanishing1} L. Ghezzi, S. Goto, J. Hong, K. Ozeki, T. T. Phuong and W. V. Vasconcelos, {\em Cohen-Macaulayness versus the vanishing of the first Hilbert coefficient of parameter ideals}, J. London Math. Soc. {\bf 81} (2010), 679-695.

\bibitem{vanishing2} L. Ghezzi, S. Goto, J. Hong, K. Ozeki, T. T. Phuong and W. V. Vasconcelos, {\em The Chern numbers and Euler characteristics of modules}, Acta Math. Vietnamica {\bf 40}, (2015), 37-60.

\bibitem{vanishing-partial1} L. Ghezzi, J. Hong and W. V. Vasconcelos, {\em The signature of the Chern coefficients of local rings}, Math. Res. Lett. {\bf 16} (2009), 279-289.

\bibitem{tightclosure} S. Goto and Y. Nakamura, {\em Multiplicity and tight closures of parameters}, J. Algebra {\bf 244} (2001), 302-311.
\bibitem{Gu-Zhu-Tang} Y. Gu, G. Zhu and Z. Tang, {\em On Hilbert coefficients of filtrations}, Chin. Ann. Math. 28B(5) (2007), 543-554.
\bibitem{Jayanthan&Verma} A. V. Jayanthan and J. K. Verma, {\em Fiber cones of ideals with almost minimal multiplicity}, Nagoya Math. J. {\bf 177} (2005),
155-179.

\bibitem{Jayanthan&Verma1} A. V. Jayanthan and J. K. Verma, {\em Hilbert coefficients and depth of fiber cones}, J. Pure and Applied Algebra {\bf 201} (2005), 97-115.


\bibitem{Mandal-Singh-Verma} M. Mandal, B. Singh and J. K. Verma, {\em On some conjectures about the Chern numbers of filtrations}, J. Algebra {\bf 325} (2011), 147-162.

\bibitem{Matsumura} H. Matsumura, {\em Commutative Ring Theory}, Cambridge University Press, Cambridge, 1986.

\bibitem{Lori} L. Mccune, {\em Hilbert coefficients of parameter ideals}, J. Commut. Algebra {\bf 5} (2013), 399-412.

\bibitem{book-Rossi-valla} M. E. Rossi and G. Valla, {\em Hilbert functions of filtered modules}, Lecture Notes of the Unione Mathematica Italiana 9,
Springer-Verlag, Berlin, UMI, Bologna, 2010.

\bibitem{swanson-huneke} I. Swanson and C. Huneke, {\em Integral closure of ideals, rings and modules}, London Math. Soc., Lecture Note Series 336 , Cambridge University Press, 2006.

\bibitem{vanishing-conjecture} W. V. Vasconcelos, {\em The Chern coefficients of local rings}, Michigan Math. J. {\bf 57} (2008), 725-743.

\bibitem{Gu-Zhu-Tang2} G. Zhu, Y. Gu and Z. Tang, {\em Hilbert coefficients of filtrations with almost maximal depth}, J. Math. Research and Exposition {\bf 28(4)} (2008).  
  
\end{thebibliography}
\end{document}